\newcommand{\thed}{\mathscr{D}}
\newcommand{\theM}{\mathscr{E}}
\newcommand{\theUB}{\mathscr{U}}
\newcommand{\kor}{{\mathrm{kor}}}
\newcommand{\bsa}{\boldsymbol{a}}
\newcommand{\bsb}{\boldsymbol{b}}
\newcommand{\bsh}{\boldsymbol{h}}
\newcommand{\bsx}{\boldsymbol{x}}
\newcommand{\ZZ}{{\mathbb{Z}}}
\newcommand{\NN}{{\mathbb{N}}}
\DeclareSymbolFont{GreekLetters}{OML}{cmr}{m}{it} 
\DeclareSymbolFont{UpSfGreekLetters}{U}{cmss}{m}{n} 
\DeclareMathSymbol{\varrho}{\mathalpha}{GreekLetters}{"25}
\DeclareMathSymbol{\UpSfLambda}{\mathalpha}{UpSfGreekLetters}{"03}
\DeclareMathSymbol{\UpSfSigma}{\mathalpha}{UpSfGreekLetters}{"06}
\providecommand{\mathbold}{\boldsymbol}
\newcommand{\bvec}[1]{\mathbold{#1}}
\newlength{\overwdth}
\newcommand*\bigcdot{\mathpalette\bigcdot@{.7}}
\newcommand*\bigcdot@[2]{\mathbin{\vcenter{\hbox{\scalebox{#2}{$\m@th#1\bullet$}}}}}
\def\abs#1{\ensuremath{\left \lvert #1 \right \rvert}}
\newcommand{\norm}[2][{}]{\ensuremath{\left \lVert #2 \right \rVert}_{#1}}
\newcommand{\ip}[3][{}]{\ensuremath{\left \langle #2, #3 \right \rangle_{#1}}}
\DeclareMathOperator{\comp}{comp}
\newcommand{\vzero}{\bvec{0}}
\newcommand{\vp}{\bvec{p}}
\newcommand{\hI}{\hat{I}}
\newcommand{\naturals}{\mathbb{N}}
\newcommand{\natzero}{\mathbb{N}_{0}}
\newcommand{\cb}{\mathcal{B}}
\newcommand{\cf}{\mathcal{F}}
\newcommand{\cg}{\mathcal{G}}
\newcommand{\ch}{\mathcal{H}}
\newcommand{\cp}{\mathcal{P}}
\DeclareMathOperator{\card}{card}
\DeclareMathOperator{\SOL}{SOL}
\DeclareMathOperator{\APP}{APP}
\renewcommand{\hI}{\mathcal{I}}
\newcommand{\vinfty}{\boldsymbol{\infty}}
\newtheorem{theorem}{Theorem}
\newtheorem{proposition}{Proposition}
\theoremstyle{definition}
\newtheorem{definition}{Definition}
\newtheorem{example}{Example}
\begin{document}

\title{A Unified Treatment of Tractability for \\ Approximation Problems Defined on Hilbert Spaces}

\author{Onyekachi Emenike, Fred J. Hickernell, Peter Kritzer}

\date{\today}
\maketitle

\begin{abstract}
     A large literature specifies conditions under which the information complexity for a sequence of numerical problems defined for dimensions $1, 2, \ldots$ grows at a moderate rate, i.e., the sequence of problems is \emph{tractable}.  Here, we focus on the situation where the space of available information consists of all linear functionals, and the problems are defined as linear operator mappings between Hilbert spaces.  We unify the proofs of known tractability results and generalize a number of existing results.  These generalizations are expressed as five theorems that provide equivalent conditions for (strong) tractability in terms of sums of functions of the singular values of the solution operators.
\end{abstract}


\section{Introduction}

\noindent The information complexity of a problem is the number of function data required to solve the problem within the desired error tolerance.  A problem is tractable if the information complexity does not grow too large as the dimension of the problem increases or the error threshold decreases.  There are a number of tractability results for numerical problems, beginning in the 1990s with \cite{W94a,W94b}, spawning numerous articles since then, and filling a series of three impressive volumes \cite{NovWoz08a,NovWoz10a,NovWoz12a}.  Research on tractability continues at a vigorous pace. 

The authors have identified common themes in the proofs of necessary and sufficient conditions for tractability that appear in the literature.  By abstracting these themes, this article aims to unify and generalize many existing tractability results for approximation problems where all possible linear functional data are available. This is done in a series of five theorems, which provide necessary and sufficient conditions for (strong) tractability.

Let $\{\cf_d\}_{d \in \naturals}$ and $\{\cg_d\}_{d \in \naturals}$ be sequences of Hilbert spaces, and let $\{\SOL_d : \cf_d \to \cg_d\}_{d \in \naturals}$ be a sequence of linear solution operators  with adjoints $\SOL_d^*$ such that $\SOL_d^*\SOL_d : \cf_d \to \cf_d$ has eigenvalues and orthonormal eigenvectors
\begin{equation*} 
 \lambda_{1,d}^2 \ge \lambda_{2,d}^2 \ge \cdots, \qquad u_{1,d}, u_{2,d}, \ldots, \qquad d \in \naturals.
\end{equation*}
Here, $\naturals$ denotes the set of positive integers.  This means that the singular values of the operator $\SOL_d$ are the non-negative $\lambda_{i,d}$. We remark that in the literature on Information-Based Complexity the singular values of $\SOL_d$ are often denoted by $\sqrt{\lambda_{i,d}}$ instead of $\lambda_{i,d}$. However, since we assume non-negativity of the $\lambda_{i,d}$, the notation used here is equivalent.

We make the technical assumption that $\SOL_d$ is a compact operator. This implies (see, e.g., \cite{NovWoz08a}) that $\lambda_{i,d}$ converges to zero as $i$ tends to infinity for every $d$, which in turn implies that the problem is solvable by suitable algorithms. To avoid trivial cases, we assume that there are an infinite number of positive singular values for every $d$.

The goal is to find a sequence of approximate solution operators, $\{\APP_d:\cb_d \times (0,\infty) \to \cg_d\}_{d \in \naturals}$, defined on the unit ball of functions in $\cf_d$, denoted $\cb_d$, which  satisfy an absolute error criterion:
\begin{equation}
    \label{eq:errcrit}
    \norm[\cg_d]{\SOL_d(f) - \APP_d(f,\varepsilon)} \le \varepsilon \qquad \forall f \in \cb_d, \ \varepsilon \in (0,\infty).\footnote{We note that the radius of the $\cb_d$ could be taken to be an arbitrary positive value, $R$, in which case the problem becomes equivalent to the original one with the tolerance $\varepsilon$ replaced by $\varepsilon/R$ since the problem and the optimal approximate solution are both linear (see below). We allow the error tolerance, $\varepsilon$, to be arbitrarily large because if $\SOL_d$ has large singular values, a large error tolerance may be a reasonable expectation.}
\end{equation}
Here, $\APP_d(f,\varepsilon)$ is allowed to depend on arbitrary linear functionals.  Note that we also allow adaptive information, i.e., the linear functionals may sequentially depend on each other.

The information complexity of the problem $\SOL_d$ is given by the function 
\[
\comp:(0,\infty) \times \naturals \to \natzero, 
\]
where $\comp(\varepsilon,d)$ is the number of linear functionals required by the best admissible algorithm to satisfy the error criterion \eqref{eq:errcrit}, and  $\naturals_0$ denotes the set of non-negative integers.
By definition, the information complexity is non-decreasing as $\varepsilon$ tends to zero.  We also expect it to be non-decreasing as $d$ tends to infinity since $d$ is typically the number of variables for the functions in $\cf_d$.

For this case where arbitrary linear functionals are allowed, the optimal approximate solution operator is known to be
\[
\APP_d(f,\varepsilon) = \sum_{i=1}^n \SOL_d(u_{i,d}) \ip[\cf_d]{f}{u_i},
\]
and the information complexity of our linear problem is
\begin{equation}\label{eq:comp_def}
\comp(\varepsilon, d) = \min \{n \in \natzero : \lambda_{n+1, d} \le \varepsilon\},
\end{equation}
see \cite{NovWoz08a}.

Strictly speaking, we are considering the \textit{absolute} error criterion in this paper.
Alternatively, one could also consider the \textit{normalized} error criterion, where the error in \eqref{eq:errcrit} is divided by the so-called \textit{initial error}, which is the error without sampling the function, or, equivalently, the operator norm of $\SOL_d$ (see \cite{NovWoz08a} for details). For the sake of brevity, we restrict ourselves to the absolute error criterion here, but presumably, analogous results hold for the normalized setting (all criteria would then be normalized by the first singular value $\lambda_{1,d}$).

The question of tractability is one of determining how fast the information complexity increases as $\varepsilon$ tends to zero and/or as $d$ tends to infinity.  Although \eqref{eq:comp_def} has a relatively simple form, its dependence on the ordering of the singular values means that it is not obvious what the behavior of $\comp(\varepsilon, d)$ is as $\varepsilon^{-1}$ and/or $d$ tend to infinity.

We want to bound the information complexity in terms of a simple function, $T$, of $\varepsilon^{-1}$, $d$, and some parameter $\vp$, and then identify conditions on the singular values that are equivalent to the fact that this bound holds, and are easier to verify than showing the bound directly. By ``simple'' we mean that $T$ takes a form such that the conditions in the theorems of this paper can be checked effectively; however, we remark that this is also related to the singular values $\lambda_{i,d}$ of the solution operator. Depending on those, a function $T$ may be useful for one particular problem setting but not for another. In Table \ref{tab:sampleT}, we present various common notions of tractability studied in the literature, characterized by $T$. 
\begin{table}[hbt!]
    \caption{Common notions of tractability}
{\small
\begin{equation*}
	\begin{array}{r@{\quad}c@{\quad}cc}
		\text{Tractability type} & T(\varepsilon^{-1},d,\vp)
		\\
		\toprule
		\text{algebraic strong polynomial} & \max\{1,\varepsilon^{-p}\} \\
		\text{algebraic polynomial} & \max\{1,\varepsilon^{-p}\}d^{q} \\
		\text{exponential strong polynomial} &  [\max\{1,\log(1 + \varepsilon^{-1}\}]^p \\
		\text{exponential polynomial} &
		[\max\{1,\log(1 + \varepsilon^{-1})\}]^p  d^{q} \\
        \text{algebraic quasi-polynomial} &
        \exp\{p(1+\log(\max\{1, \varepsilon^{-1}\}))(1+\log(d))\} \\
        \text{exponential quasi-polynomial} &
        \exp\{p(1+\log(\max\{1,\log(1+\varepsilon^{-1})\}))(1+\log(d))\} 
	\end{array}
\end{equation*}}
\label{tab:sampleT}
\end{table}

We note that generalized tractability specified by a function $T$ has been considered before in the literature. In particular, there are several papers by Gnewuch and Wo\'{z}niakowski (see \cite{GW06,GW08,GW09,GneWoz11a}), and also \cite[Chapter 8]{NovWoz08a} is devoted to this topic. To our understanding, however, these references mainly focus on the special case of tensor product problems, a restriction we do not make.  Moreover, the conditions for tractability derived there focus mostly on relating the properties of the function $T$ to the different varieties of tractability, and not in terms of sums of (functions of) the singular values of a problem. Hence, the present paper adds to what is presently known on this subject. 

In this article, we investigate a \emph{generalized tractability function}, denoted by $T$:
\begin{equation} \label{eq:Tspec}
    T :(0,\infty) \times \mathbb{N} \times [0,\infty)^s \rightarrow (0,\infty).
\end{equation}
The conditions we require $T$ to satisfy are described in  Section \ref{sec:not}, however, the basic idea is that we define our approximation problem to be tractable if $\comp(\varepsilon, d) \le  C_{\vp}\, T(\varepsilon^{-1},d,\vp)$ for some constant\footnote{By ``constant'' we mean that the term is independent of the \emph{variables} $\varepsilon$ and $d$, but may depend on the \emph{parameter} $\vp$.}, $C_{\vp}$, depending only on the parameter $\vp$. For instance, when considering (algebraic) polynomial tractability, the parameter $\vp$ represents the exponents $(p,q)$ of $\varepsilon$ and $d$, as illustrated in Table \ref{tab:sampleT}.  In practice, we expect $T(\varepsilon^{-1},d,\vp)$ to increase sub-exponentially with increasing $\varepsilon^{-1}$ and/or $d$, however, most of our theorems do not require such an assumption.

The parameter $\vp$ is an $s$-dimensional vector with $s \geq 1$. We write $\vzero$ to denote a vector with all components equal to zero, and $\vinfty$ to denote a vector with all components equal to $\infty$.
Furthermore, for two vectors $\vp=(p_1,\ldots,p_s)$ and $ \vp'=(p_1',\ldots,p_s')$ in $[0,\infty)^s$, 
we write $\vp\ge \vp'$ if $p_j\ge p'_j$ for all $j\in \{1,\ldots,s\}$. Furthermore, $\vp > \vp'$ 
if $p_j> p'_j$ for all $j\in \{1,\ldots,s\}$. The expression $\vp\in (\vp',\vp'')$ is to be interpreted as 
$\vp' < \vp < \vp''$ in that sense, and analogously for half-closed or closed intervals.

\Cref{thm_main_strong_tract2,thm_main_tract2,thm_main_rest_strong_tract,thm_main_rest_tract} below all take the following form:  for a given $\thed \in \{1,d\}$ and $\Omega \subseteq (0,\infty) \times \mathbb{N}$,
\begin{multline} \label{eq:metatheorem}
  \exists \,C_{\vp} > 0 \text{ with }
    \comp(\varepsilon, d) \le  C_{\vp}\, T(\varepsilon^{-1},\thed,\vp) \quad \forall (\varepsilon^{-1},d) \in \Omega \\
    \iff   \exists L_{\vp} >0 \text{ with }
    \sup_{d \in \naturals}
     \sum_{i = \lceil L_{\vp}\, T(0,\thed,\vp) \rceil}^{\theUB(d)} \frac{1}{T(\lambda^{-1}_{i,d},\thed,\vp)}< \infty, \\
     \text{where } \theM(d) := \inf \{\varepsilon : (\varepsilon^{-1},d) \in \Omega \}, \
     \theUB(d) := \begin{cases}
        \min\{n \in \natzero : \lambda_{n+1,d} \le \theM(d) \}, &   \theM(d) > 0, \\
        \infty, & \theM(d) = 0,
        \end{cases}
\end{multline}
where $T(0,\thed,\vp)$ is defined as in \eqref{eq:T0} below. These theorems say essentially that \emph{(strong) tractability is equivalent to summability conditions on the singular values.} A more stringent form of tractability resulting from a smaller $T$ is equivalent to a more stringent summability condition since the terms being summed become larger.

Although some of \Cref{thm_main_strong_tract2,thm_main_tract2,thm_main_rest_strong_tract,thm_main_rest_tract} are special cases of others, we prove each separately to allow the reader to become familiar with the arguments used as additional layers of complexity are added. (This may be, e.g., additional
dependence on $d$ when going from strong tractability to tractability in Subsection~\ref{sec:tractability}, or a restriction of the domain of $\varepsilon$ for which we define tractability in Section 
~\ref{sec:restricted}).  \Cref{thm:subhT} has a somewhat different form than \eqref{eq:metatheorem} above.  

Although in many cases $\Omega$ is chosen to be all of $(0,\infty) \times \mathbb{N}$, there are situations where it makes sense to choose $\Omega$ as a proper subset of all possible values of $\varepsilon^{-1}$ and $d$.  For some problems with unbounded $d$---as in finance---there may be no need for arbitrarily small error tolerances, so $\Omega = (0,\varepsilon_{\min}^{-1}] \times \mathbb{N}$ may be appropriate.  For other problems with a moderate upper bound on $d$, $\Omega = (0,\infty) \times \{1, \ldots, d_{\max}\}$ may be appropriate.

The remainder of this paper is organized as follows. In Section \ref{sec:spt}, we give the definitions of the considered tractability notions, in particular (strong) tractability. We consider strong tractability when there is an upper bound on the information complexity that is independent of the dimension, $d$. We then turn to tractability when the bound on the information complexity may depend on both $d$ and $\varepsilon^{-1}$. In Section \ref{sec:examples} we provide examples of tractability functions for both the algebraic and exponential cases. We also consider other notions of tractability such as quasi-polynomial tractability, and introduce other notions of tractability. A generalized notion of weak tractability is given in Section \ref{sec:subh}. In Section \ref{sec:restricted}, we consider tractability on a restricted domain, $\Omega$. There are two appendices to the paper. Appendix 1 contains an example that shows how we can make use of one of the findings in this paper in a concrete example. Appendix 2 contains the technical proof of Proposition \ref{prop:equiv_conditions}.

\section{(Strong) Tractability}\label{sec:spt}

\subsection{Notation and fundamental definitions} \label{sec:not}
Let $T$ be a function given in \eqref{eq:Tspec}.
A tractability function $T$ is a function of a simple form that provides an upper bound on the information complexity of a problem.

\begin{definition}
    A problem is defined as $T$-tractable with parameter $\vp$ iff there exists a positive constant $C_{\vp}$, which is independent of $\varepsilon$ and $d$, such that
\begin{equation} \label{eq:tract_def}
	\comp(\varepsilon,d) \le C_{\vp}\, T(\varepsilon^{-1},d,\vp) \qquad \forall \varepsilon >0, \ d \in \mathbb{N}.
\end{equation}
A problem is \emph{strongly}
$T$-tractable with parameter $\vp$ iff the information complexity is independent of the dimension of the problem, that is, there exists a positive constant $C_{\vp}$, again independent of $\varepsilon$ and $d$, such that
\begin{equation} \label{eq:strong_tract_def}
	\comp(\varepsilon,d) \le C_{\vp}\, T(\varepsilon^{-1},1,\vp) \qquad \forall \varepsilon >0 , \ d \in \mathbb{N}.
\end{equation}
We allow $\vp$ to be a scalar or vector as the situation dictates.
\end{definition}

For this definition of tractability to make sense we assume that
\begin{subequations} \label{eq:Tconditions}
\begin{equation} \label{eq:Tnondecreasing}
	T \text{ is non-decreasing in all variables,}
\end{equation}
which implies that the problem is expected to be no easier by decreasing $\varepsilon$, the tolerance, or increasing $d$, the dimension\footnote{This is why we make $T$ a function of $\varepsilon^{-1}$ rather than of $\varepsilon$.}. Furthermore, increasing $\vp$ allows for a possibly looser bound on the information complexity. Since there are an infinite number of positive singular values, it also makes sense to assume that
\begin{equation}\label{eq:Tlim}
	\lim_{\varepsilon \to 0} T(\varepsilon^{-1},d,\vp) = \infty \qquad \forall d \in \naturals, \ \vp \in [0,\infty)^s.
\end{equation}
Since $T(\cdot,d,\vp)$ is non-decreasing, we may define the following limit, which we assume will always be positive:
\begin{equation}\label{eq:T0}	T(0,d,\vp):=\lim_{\varepsilon\to\infty}T(\varepsilon^{-1},d,\vp) = \inf_\varepsilon T(\varepsilon^{-1},d,\vp) \ge T(0,1,\vzero) > 0.
\end{equation}
\end{subequations}
Finally, there is a technical assumption required for our analysis in \Cref{sec:spt,sec:restricted}.  There exists a $K_{\vp,\tau}$ depending on $\vp$ and $\tau$, but  independent of $\varepsilon$ and $d$, such that
\begin{equation} \label{eq:ptauassume}
	[T(\varepsilon^{-1},d,\vp)]^\tau \le K_{\vp,\tau} T(\varepsilon^{-1},d,\tau \vp)   \quad \forall \varepsilon \in (0,\infty), \ d \in \naturals, \ \vp\in[0,\infty)^s, \ \tau\in [1,\infty).
\end{equation}
We remark that the latter assumption is quite natural, and is, for example fulfilled for all tractability types in Table~\ref{tab:commonT}.

Certain common notions of tractability covered by the general situation described above are provided in Table \ref{tab:commonT}.  Here, $\vp = (p,q)$ in some cases, and $\vp$ is a scalar in other cases. A general reference
for results on algebraic tractability is \cite{NovWoz08a}.  For exponential tractability we refer to \cite{HicKriWoz19a} and \cite{KriWoz19a}.

\begin{table}
    \caption{Common forms of the tractability function, $T$.}
{\small
\begin{equation*}
	\begin{array}{r@{\quad}c@{\quad}cc}
		\text{Tractability type} & T(\varepsilon^{-1},d,\vp)
		& T(0,d,\vp)\\
		\toprule
		\text{algebraic polynomial} & \max\{1,\varepsilon^{-p}\}d^{q} & d^{q}\\
		\text{exponential polynomial} &
		[\max\{1,\log(1 + \varepsilon^{-1})\}]^p  d^{q} & d^{q} \\
        \text{algebraic quasi-polynomial} &
        \exp\{p(1+\log(\max\{1, \varepsilon^{-1}\}))(1+\log(d))\} &
        \exp\{p(1+\log(d))\}\\
        \text{exponential quasi-polynomial} &
        \exp\{p(1+\log(\max\{1,\log(1+\varepsilon^{-1})\}))(1+\log(d))\} &
        \exp\{p(1+\log(d))\}
	\end{array}
\end{equation*}}
\label{tab:commonT}
\end{table}

If \eqref{eq:tract_def} holds for some $\vp$, it clearly holds for larger $\vp$.  We are often interested in the optimal or smallest $\vp$ for which \eqref{eq:tract_def} holds.  Define the closures of the sets of parameters for which our (strong) tractability conditions hold:
\begin{equation*}
	\cp_{\textup{trct}} : = \{\vp^* : \eqref{eq:tract_def} \text{ holds }\forall \vp \in (\vp^*,\boldsymbol{\infty})\}, \qquad
	\cp_{\textup{strct}} : = \{\vp^* : \eqref{eq:strong_tract_def} \text{ holds }\forall \vp \in (\vp^*,\boldsymbol{\infty})\}.
\end{equation*}
If $\vp^* \in \cp_{\textup{(s)trct}}$, (strong) tractability may not hold for $\vp = \vp^*$, but it must hold for any $\vp$ whose components are all greater than the corresponding components of $\vp^*$.

\begin{definition}
    The  set of optimal parameters is defined as all of those parameters satisfying the (strong) tractability conditions that are not greater than or equal to others:
\begin{gather}
	\label{eq:Poptdef}
	\mathcal{P}_{\textup{opt}} : = \{\vp^* \in \cp_{\textup{trct}} :  \vp^* \notin [\vp,\boldsymbol{\infty}) \ \forall \vp \in  \cp_{\textup{trct}} \setminus \{\vp^*\} \}, \\
	\label{eq:Psoptdef}
	\mathcal{P}_{\textup{sopt}} : = \{\vp^* \in \cp_{\textup{strct}} :  \vp^* \notin [\vp,\boldsymbol{\infty}) \ \forall \vp \in  \cp_{\textup{strct}} \setminus \{\vp^*\} \}.
\end{gather}
\end{definition}

In the sections below we prove necessary and sufficient conditions on tractability as generally defined in \eqref{eq:tract_def} and \eqref{eq:strong_tract_def}.  These conditions involve the boundedness of sums defined in terms of $(T(\lambda_{i,d}^{-1},\cdot, \cdot))_{i=1}^\infty$.  In practice, it may be easier to verify whether or not these conditions hold than to verify \eqref{eq:tract_def} and \eqref{eq:strong_tract_def} directly.

\subsection{Strong tractability}

We first consider the simpler case when the information complexity is essentially independent of the dimension, $d$.  The proof also introduces the line of argument used for the case where there is $d$-dependence.

\begin{theorem}\label{thm_main_strong_tract2}
Let $T$ be a tractability function as specified in \eqref{eq:Tspec} and satisfying \eqref{eq:Tconditions} and \eqref{eq:ptauassume}.  A problem is strongly $T$-tractable iff there exists $\vp \in [\vzero, \boldsymbol{\infty})$ and an integer $L_{\vp} > 0$ such that
\begin{equation} \label{eq:strong_tractiff3}
     S_{\vp}:=\sup_{d \in \naturals} \sum_{i = L_{\vp}}^\infty \frac{1}{T(\lambda_{i,d}^{-1},1,\vp)} < \infty.
\end{equation}
If \eqref{eq:strong_tractiff3} holds for some $\vp$, let  $\widetilde{\cp}_{\textup{strct}} : = \{\vp^* : \eqref{eq:strong_tractiff3} \text{ holds }\forall \vp \in (\vp^*,\boldsymbol{\infty})\}$.  Then $\cp_{\textup{strct}} = \widetilde{\cp}_{\textup{strct}}$, and the set of optimal strong tractability parameters is
\[
	\cp_{\textup{sopt}} =
	\{\vp^* \in \widetilde{\cp}_{\textup{strct}} :  \vp^* \notin [\widetilde{\vp},\boldsymbol{\infty}) \ \forall \widetilde{\vp}\in  \widetilde{\cp}_{\textup{strct}} \setminus \{\vp^*\} \}.
\]
\end{theorem}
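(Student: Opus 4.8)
The plan is to establish the equivalence in two directions, and then separately argue the characterization of $\cp_{\textup{strct}}$ and $\cp_{\textup{sopt}}$. Throughout, the basic dictionary is \eqref{eq:comp_def}: $\comp(\varepsilon,d) \le C_\vp\, T(\varepsilon^{-1},1,\vp)$ is exactly the statement that $\lambda_{\lceil C_\vp T(\varepsilon^{-1},1,\vp)\rceil + 1,\,d} \le \varepsilon$, so strong $T$-tractability says that for each $d$ and each threshold $\varepsilon$, the number of singular values exceeding $\varepsilon$ is at most $C_\vp\,T(\varepsilon^{-1},1,\vp)$. The key reformulation is therefore: strong $T$-tractability holds with constant $C_\vp$ iff for all $d$ and all $i \ge 1$ with $\lambda_{i,d}>0$ one has $i - 1 \le C_\vp\, T(\lambda_{i,d}^{-1},1,\vp)$ whenever $\lambda_{i,d} \le$ the relevant tolerance, equivalently (taking $\varepsilon \downarrow \lambda_{i,d}$ and using monotonicity of $T$ and right-continuity considerations) $i \le C_\vp\, T(\lambda_{i,d}^{-1},1,\vp) + 1$ for every index $i$. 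I would state this carefully as a preliminary lemma, handling the mild subtlety that $\comp$ counts strictly and $T$ need not be left-continuous; this is the kind of bookkeeping that must be done once and reused in all five theorems.

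For the direction ``summability $\Rightarrow$ strong tractability'': assume \eqref{eq:strong_tractiff3} holds for some $\vp$ with sum bound $S_\vp$ and tail start $L_\vp$. Fix $\tau \ge 1$ to be chosen and work with the parameter $\tau\vp$. Given $d$ and an index $n \ge L_\vp$, the terms $1/T(\lambda_{i,d}^{-1},1,\vp)$ for $L_\vp \le i \le n$ are each at least $1/T(\lambda_{n,d}^{-1},1,\vp)$ since the $\lambda_{i,d}$ are non-increasing and $T$ is non-decreasing in its first argument; hence $(n - L_\vp + 1)/T(\lambda_{n,d}^{-1},1,\vp) \le S_\vp$, i.e. $n \le S_\vp\, T(\lambda_{n,d}^{-1},1,\vp) + L_\vp - 1$. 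Using \eqref{eq:ptauassume} one can absorb constants: raising to a power or rather choosing $\tau$ and $C_{\tau\vp}$ appropriately, $S_\vp\,T(\lambda_{n,d}^{-1},1,\vp) + L_\vp \le C_{\tau\vp}\,T(\lambda_{n,d}^{-1},1,\tau\vp)$ for a suitable constant, because $T \ge T(0,1,\vzero) > 0$ by \eqref{eq:T0} lets us convert the additive $L_\vp$ into a multiplicative factor, and $(\cdot)^\tau$-type inequality \eqref{eq:ptauassume} lets us pass from $\vp$ to $\tau\vp$. Translating $n \le C_{\tau\vp}\,T(\lambda_{n,d}^{-1},1,\tau\vp)$ back through the dictionary gives $\comp(\varepsilon,d) \le C_{\tau\vp}\, T(\varepsilon^{-1},1,\tau\vp)$ for all $\varepsilon, d$, which is strong $\tau\vp$-tractability; the indices $n < L_\vp$ are handled trivially since $T(0,1,\cdot)>0$ bounds finitely many of them.

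For the converse ``strong tractability $\Rightarrow$ summability'': suppose strong $T$-tractability holds with parameter $\vp$ and constant $C_\vp$. Choose $\tau > 1$ and consider the sum $\sum_{i \ge L} 1/T(\lambda_{i,d}^{-1},1,\tau\vp)$. From the dictionary, $i \le C_\vp\, T(\lambda_{i,d}^{-1},1,\vp) + 1 \le 2C_\vp\,T(\lambda_{i,d}^{-1},1,\vp)$ for $i$ large (using $T \ge T(0,1,\vzero)$), hence $T(\lambda_{i,d}^{-1},1,\vp) \ge i/(2C_\vp)$, and then by \eqref{eq:ptauassume}, $T(\lambda_{i,d}^{-1},1,\tau\vp) \ge K_{\vp,\tau}^{-1}[T(\lambda_{i,d}^{-1},1,\vp)]^\tau \ge K_{\vp,\tau}^{-1}(i/(2C_\vp))^\tau$; since $\tau>1$ the series $\sum i^{-\tau}$ converges, and the bound is uniform in $d$, giving \eqref{eq:strong_tractiff3} for the parameter $\tau\vp$ with $L_{\tau\vp}$ chosen past where the ``$i$ large'' estimate kicks in. The equality $\cp_{\textup{strct}} = \widetilde\cp_{\textup{strct}}$ now follows: both implications above actually move the parameter by an arbitrary factor $\tau>1$, so $\vp^* \in \cp_{\textup{strct}}$ iff strong tractability holds for all $\vp > \vp^*$ iff (by the two directions, shrinking $\tau \downarrow 1$) summability holds for all $\vp > \vp^*$ iff $\vp^* \in \widetilde\cp_{\textup{strct}}$; here one uses that $(\vp^*,\vinfty)$ is ``cofinal from below'' so that the factor-$\tau$ slack is harmless. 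Finally, $\cp_{\textup{sopt}}$ is by \eqref{eq:Psoptdef} a purely order-theoretic function of $\cp_{\textup{strct}}$ (the set of minimal elements of its upward closure), so equality of the two closure-sets immediately yields the stated formula with $\widetilde\cp_{\textup{strct}}$ in place of $\cp_{\textup{strct}}$.

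The main obstacle I anticipate is not any single inequality but the careful management of the parameter shift: strong tractability with parameter $\vp$ does not give summability with the \emph{same} $\vp$, only with $\tau\vp$ for every $\tau>1$ (and conversely), and one must verify that this $\varepsilon$-of-room is exactly reconciled by the definitions of $\cp_{\textup{strct}}$ and $\widetilde\cp_{\textup{strct}}$ as closures over open upward cones $(\vp^*,\vinfty)$. A secondary but genuine nuisance is the left-/right-continuity mismatch between the integer-valued $\comp$ (a strict-inequality count) and the possibly-discontinuous $T$; I would isolate this in the preliminary dictionary lemma so the later theorems can invoke it cleanly.
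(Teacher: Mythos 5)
Your proposal is correct and follows essentially the same route as the paper: the averaging bound $n \le S_{\vp}\,T(\lambda_{n,d}^{-1},1,\vp) + L_{\vp}-1$ for sufficiency, the inversion $i \le C_{\vp}T(\lambda_{i,d}^{-1},1,\vp)+1$ combined with \eqref{eq:ptauassume} and $\sum_i i^{-\tau} = \zeta(\tau)$ for necessity, and the open-upward-cone definitions of $\cp_{\textup{strct}}$ and $\widetilde{\cp}_{\textup{strct}}$ to absorb the $\tau$-shift in the optimality claim. The only (harmless) difference is that you invoke a parameter shift to $\tau\vp$ in the sufficiency direction, whereas the paper absorbs the additive $L_{\vp}$ purely into the multiplicative constant $C_{\vp}$ via $T(0,1,\vp)>0$ and obtains strong tractability with the same $\vp$.
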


\begin{proof}
\textbf{Sufficient condition:}\newline
We make the first part of the argument in some generality so that it can be reused in the proof of Theorem \ref{thm_main_tract2} for tractability.  Fix any $\varepsilon > 0$ and any $d \in \naturals$, and let $\thed \in \naturals$ be arbitrary. 
Since the $\lambda_{i,d}$ are non-increasing in $i$, it follows that the $T(\lambda_{i,d}^{-1},\thed,\vp)$ are non-decreasing in $i$. In particular, for  any $N\in\naturals$ with $n\ge N$, we have
\begin{align*}
    \lambda_{n+1,d} \le \lambda_{n,d} \le \cdots \le \lambda_{N,d}
    & \implies \frac{1}{T(\lambda_{n+1, d}^{-1},\thed,\vp)} \le \frac{1}{T(\lambda_{n, d}^{-1},\thed,\vp)} \le \cdots \le \frac{1}{T(\lambda_{N, d}^{-1},\thed,\vp)} \\
     \implies \frac{1}{T(\lambda_{n+1, d}^{-1},\thed,\vp) }
    \le &\frac{1}{n-N+1} \sum_{i=N}^n  \frac{1}{T(\lambda_{i, d}^{-1},\thed,\vp) }
    \le \frac{1}{n-N+1} \sum_{i=N}^\infty  \frac{1}{T(\lambda_{i, d}^{-1},\thed,\vp)},
\end{align*}
where the infinite sum is guaranteed to exist by the condition \eqref{eq:strong_tractiff3}.
Thus, we can conclude from the previous line that
\begin{multline} \label{eq:implicate_a}
    n - N +1 \ge T(\varepsilon^{-1},\thed,\vp) \sum_{i=N}^\infty \frac{1}{T(\lambda_{i, d}^{-1},\thed,\vp)} \\
   \implies   \frac{1}{T(\lambda_{n+1, d}^{-1},\thed,\vp)} \le
   \frac{1}{n-N+1} \sum_{i=N}^\infty \frac{1}{T(\lambda_{i, d}^{-1},\thed,\vp) } \le \frac{1}{T(\varepsilon^{-1},\thed,\vp)} \\ \forall \thed, N \in \naturals\ \mbox{and}\ n\ge N.
\end{multline}

Moreover, given that $T$ is increasing in its arguments, we have the following equivalent expression for the information complexity via \eqref{eq:comp_def}:
\begin{align} 
	\nonumber
	\comp(\varepsilon, d) & = \min \{n \in \natzero : \lambda_{n+1, d} \le \varepsilon\} \\
	\nonumber
	& = \min \biggl\{n \in \natzero : \frac{1}{T(\lambda_{n+1,d}^{-1},\thed,\vp)}\le \frac{1}{T(\varepsilon^{-1},\thed,\vp) }\biggr\} \qquad  \forall \thed\in \naturals\\
	\nonumber
	& \le  \min \biggl\{n \in \natzero : n \ge N - 1 + T(\varepsilon^{-1},\thed,\vp) \sum_{i=N}^\infty \frac{1}{T(\lambda_{i, d}^{-1},\thed,\vp)} \biggr\} \;  \forall  \thed, N \in \naturals, \quad \text{by \eqref{eq:implicate_a}} \\
	& \le  T(\varepsilon^{-1},\thed,\vp) \Biggl [\frac{N}{T(\varepsilon^{-1},\thed,\vp)}  +  \sum_{i=N}^\infty \frac{1}{T(\lambda_{i, d}^{-1},\thed,\vp)} \Biggr] \quad \forall  \thed, N \in \naturals. \label{eq:compUP}
\end{align}

Now we take this upper bound further, specializing to the case of $\thed=1$ and $N=L_{\vp}$:
\begin{align*}
       \comp(\varepsilon,d)
       & \le T(\varepsilon^{-1},1, \vp) \Biggl [ \frac{L_{\vp}}{T(\varepsilon^{-1},1, \vp)}  + \underbrace{\sum_{i=L_{\vp}}^\infty \frac{1}{T(\lambda_{i, d}^{-1},1,\vp)}}_{\le S_{\vp} <\infty\ \text{by \eqref{eq:strong_tractiff3}} }
        \Biggr] \\
       & \le T(\varepsilon^{-1},1,\vp) \underbrace{\left[\frac{L_{\vp}}{T(0,1, \vp)} + S_{\vp} \right]}_{=:C_{\vp}}
       \qquad \text{by \eqref{eq:Tnondecreasing}}\\
       & =  C_{\vp}T(\varepsilon^{-1},1,\vp).
\end{align*}
This means that we have strong $T$-tractability via \eqref{eq:strong_tract_def}, and verifies the sufficiency of \eqref{eq:strong_tractiff3}.

\bigskip
\noindent \textbf{Necessary condition:} \\
Suppose that we have strong
$T$-tractability as defined in \eqref{eq:strong_tract_def}. That is, for some $\vp \ge \vzero$ there exists a positive constant $C_{\vp}$ such that
\[
\comp(\varepsilon,d)\le C_{\vp}\, T(\varepsilon^{-1},1,\vp)
\qquad \forall \varepsilon > 0, \ d \in \mathbb{N}.
\]
Since the sequence of singular values $\lambda_{1,d}, \lambda_{2,d}, \ldots $ is non-increasing, we have
\begin{equation}\label{eq:lambda_K_strong3}
\lambda_{\lfloor C_{\vp}\, T(\varepsilon^{-1},1,\vp)\rfloor +1,d}\le \varepsilon \qquad \forall \varepsilon > 0, \ d \in \mathbb{N}.
\end{equation}

For all positive $\varepsilon$, define
\begin{equation}\label{eq:iepsp}
i(\varepsilon,\vp):= \lfloor C_{\vp}\, T(\varepsilon^{-1},1,\vp)\rfloor +1, \quad
i(\infty,\vp) =\lfloor C_{\vp}\, T(0,1,\vp)\rfloor +1.
\end{equation}
Thus, it follows by \eqref{eq:lambda_K_strong3} that $\lambda_{i(\varepsilon,\vp),d} \le \varepsilon$.
Note furthermore that we always have
\[
i(\varepsilon,\vp)\le C_{\vp}\, T(\varepsilon^{-1},1,\vp)+1 \le C_{\vp} T(\lambda_{i(\varepsilon,\vp),d}^{-1},1,\vp)+1 \qquad \forall \varepsilon > 0, \ d \in \mathbb{N},
\]
since
$T(\cdot,1, \vp)$ is non-decreasing.

For $\varepsilon$ taking on all positive values, $i(\varepsilon,\vp)$ takes on, by \eqref{eq:Tlim}, all values greater than or equal to $i(\infty,\vp)$, so
\[
i\le C_{\vp} T(\lambda_{i,d}^{-1},1,\vp)+1 \qquad \forall i \ge i(\infty,\vp), \ d \in \naturals.
\]
This implies via our technical assumption \eqref{eq:ptauassume} that, for any $\tau>1$,
\begin{gather*}
 K_{\vp,\tau}\,T (\lambda_{i,d}^{-1},1,\tau \vp) \ge
 [T(\lambda_{i,d}^{-1},1, \vp)]^\tau
 \ge
  \left[\frac{(i-1)}{C_{\vp}}\right]^\tau \qquad \forall i \ge i(\infty,\vp), \ d \in \naturals, \\
 \frac{1}{T (\lambda_{i,d}^{-1},1,\tau \vp)} \le
\frac{K_{\vp,\tau}\, C_{\vp}^\tau}{(i-1)^\tau}\qquad \forall i \ge \max\{i(\infty,\vp),2\}, \ d \in \naturals .
\end{gather*}
Summing both sides of this last inequality over $i$ from $i(\infty,\vp) +1 \ge 2$ to $\infty$ yields
\begin{equation*}
\sup_{d\in\naturals} \sum_{i=i(\infty,\vp) +1 }^\infty \frac{1}{T (\lambda_{i,d}^{-1},1, \tau \vp)}
 \le  K_{\vp,\tau}\, C_{\vp}^\tau
\sum_{i=2}^\infty \frac{1}{(i-1)^\tau} \\
 \le K_{\vp,\tau}\, C_{\vp}^\tau
\zeta (\tau)  < \infty,
\end{equation*}
where $\zeta$ denotes the Riemann zeta function.
This yields \eqref{eq:strong_tractiff3} with $\vp$ replaced by $\vp' = \tau \vp$ and $L_{\vp'} = i(\infty,\vp) +1$, and so we see the necessity of $\eqref{eq:strong_tractiff3}$.

\bigskip
\noindent \textbf{Optimality:} \\
To complete the proof, we must show that $\cp_{\textup{strct}} = \widetilde{\cp}_{\textup{strct}}$.  Then, the expression for  $\cp_{\textup{sopt}}$ automatically follows from its definition in \eqref{eq:Psoptdef}.

First we show  that if $\vp^* \in \cp_{\textup{strct}}$, then $\vp^* \in \widetilde{\cp}_{\textup{strct}}$.  If $\vp^* \in \cp_{\textup{strct}}$, then \eqref{eq:strong_tract_def} must hold for all $\vp' \in (\vp^*,\vinfty)$.  For any $\vp' \in (\vp^*,\vinfty)$, we may choose $\vp$ and $\tau$ such that $\vp^* < \vp < \vp' = \tau \vp$. Since $\vp > \vp^*$, \eqref{eq:strong_tract_def} also holds for $\vp$, and it follows that \eqref{eq:strong_tractiff3} must hold for $\vp'$ by the proof of necessity above.  Thus, $\vp^* \in \widetilde{\cp}_{\textup{strct}}$.

Next we show that if $\vp^* \in \widetilde{\cp}_{\textup{strct}}$, then $\vp^* \in \cp_{\textup{strct}}$.  If $\vp^* \in \widetilde{\cp}_{\textup{strct}}$, then \eqref{eq:strong_tractiff3}  holds for all $\vp \in (\vp^*,\vinfty)$.  By the argument to prove the sufficient condition above, it follows that \eqref{eq:strong_tract_def} must also hold for all $\vp \in (\vp^*,\vinfty)$. Thus, $\vp^* \in \cp_{\textup{strct}}$.

\bigskip

\noindent This concludes the proof of  Theorem \ref{thm_main_strong_tract2}. 

\end{proof}

\subsection{Tractability} \label{sec:tractability}

The argument proving equivalent conditions for tractability is similar to, but somewhat more involved than the proof of Theorem \ref{thm_main_strong_tract2}.  The lower limit on the sum is somewhat more complicated as well.

\begin{theorem}\label{thm_main_tract2}
Let $T$ be a tractability function as specified in \eqref{eq:Tspec} and satisfying \eqref{eq:Tconditions} and \eqref{eq:ptauassume}.  A problem is $T$-tractable iff there exists $\vp\ge \vzero$ and a positive constant $L_{\vp}$ such that
\begin{equation} \label{eq:tractiff4}
     S_{\vp}:=\sup_{d \in \naturals}
     \sum_{i = \lceil L_{\vp}\, T(0,d,\vp) \rceil}^\infty \frac{1}{T(\lambda^{-1}_{i,d},d,\vp)}< \infty.
\end{equation}

If \eqref{eq:tractiff4} holds for some $\vp$, let $\widetilde{\cp}_{\textup{trct}} : = \{\vp^* : \eqref{eq:tractiff4} \text{ holds }\forall \vp \in (\vp^*,\boldsymbol{\infty}) \}$.
Then $\cp_{\textup{trct}} = \widetilde{\cp}_{\textup{trct}}$, and the set of optimal  tractability parameters is
\[
\cp_{\textup{opt}} =
\{\vp^* \in \widetilde{\cp}_{\textup{trct}} :  \vp^* \notin [\widetilde{\vp},\boldsymbol{\infty}) \ \forall \widetilde{\vp}\in  \widetilde{\cp}_{\textup{trct}} \setminus \{\vp^*\} \}.
\]

\end{theorem}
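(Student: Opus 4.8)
The plan is to follow the same three-part structure as the proof of \Cref{thm_main_strong_tract2} (sufficiency, necessity, optimality), inserting the extra $d$-dependence where it arises. For sufficiency, I would reuse the general bound \eqref{eq:compUP}, which was deliberately stated for arbitrary $\thed \in \naturals$ and arbitrary $N \in \naturals$: setting $\thed = d$ (rather than $\thed = 1$) and choosing $N = N(d) := \lceil L_{\vp}\, T(0,d,\vp) \rceil$, which now depends on $d$, gives
\[
\comp(\varepsilon,d) \le T(\varepsilon^{-1},d,\vp)\Biggl[\frac{N(d)}{T(\varepsilon^{-1},d,\vp)} + \sum_{i=N(d)}^\infty \frac{1}{T(\lambda_{i,d}^{-1},d,\vp)}\Biggr].
\]
The inner sum is at most $S_{\vp} < \infty$ by \eqref{eq:tractiff4}, and $N(d)/T(\varepsilon^{-1},d,\vp) \le \lceil L_{\vp} T(0,d,\vp)\rceil / T(0,d,\vp)$ by \eqref{eq:Tnondecreasing}; since $T(0,d,\vp) \ge T(0,1,\vzero) > 0$ by \eqref{eq:T0}, the ceiling costs at most an additive $1/T(0,1,\vzero)$, so $N(d)/T(0,d,\vp) \le L_{\vp} + 1/T(0,1,\vzero)$ is bounded uniformly in $d$. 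Hence $\comp(\varepsilon,d) \le C_{\vp}\, T(\varepsilon^{-1},d,\vp)$ with $C_{\vp} := L_{\vp} + 1/T(0,1,\vzero) + S_{\vp}$, which is \eqref{eq:tract_def}.

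For necessity, I would mimic the strong-tractability argument but keep $d$ live. Assuming $\comp(\varepsilon,d) \le C_{\vp} T(\varepsilon^{-1},d,\vp)$, the monotonicity of the singular values gives $\lambda_{\lfloor C_{\vp} T(\varepsilon^{-1},d,\vp)\rfloor+1,\,d} \le \varepsilon$, so defining $i(\varepsilon,d,\vp) := \lfloor C_{\vp} T(\varepsilon^{-1},d,\vp)\rfloor + 1$ (and $i(\infty,d,\vp) := \lfloor C_{\vp} T(0,d,\vp)\rfloor + 1$) we get, exactly as before, $i \le C_{\vp} T(\lambda_{i,d}^{-1},d,\vp) + 1$ for all $i \ge i(\infty,d,\vp)$ and all $d$. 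Applying \eqref{eq:ptauassume} with some $\tau > 1$ yields $1/T(\lambda_{i,d}^{-1},d,\tau\vp) \le K_{\vp,\tau} C_{\vp}^\tau / (i-1)^\tau$ for $i \ge \max\{i(\infty,d,\vp),2\}$. The one new point is that the summation's lower limit $i(\infty,d,\vp)$ now depends on $d$, so I must check it sits below the threshold $\lceil L_{\vp'} T(0,d,\tau\vp)\rceil$ appearing in \eqref{eq:tractiff4}: since $i(\infty,d,\vp) \le C_{\vp} T(0,d,\vp) + 1 \le C_{\vp} T(0,d,\tau\vp) + 1$ (as $\tau\vp \ge \vp$ and $T$ is non-decreasing in $\vp$), choosing $L_{\vp'}$ with $L_{\vp'} > C_{\vp}$ — e.g.\ $L_{\vp'} := C_{\vp} + 1$ — makes $\lceil L_{\vp'} T(0,d,\tau\vp)\rceil \ge i(\infty,d,\vp) + 1$ for every $d$ (using $T(0,d,\tau\vp) \ge T(0,1,\vzero) > 0$ to absorb the ceiling and the "+1"). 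Then
\[
\sup_{d\in\naturals} \sum_{i=\lceil L_{\vp'} T(0,d,\tau\vp)\rceil}^\infty \frac{1}{T(\lambda_{i,d}^{-1},d,\tau\vp)} \le K_{\vp,\tau} C_{\vp}^\tau \sum_{i=2}^\infty \frac{1}{(i-1)^\tau} = K_{\vp,\tau} C_{\vp}^\tau \zeta(\tau) < \infty,
\]
which is \eqref{eq:tractiff4} with parameter $\vp' = \tau\vp$, proving necessity.

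The optimality claim $\cp_{\textup{trct}} = \widetilde{\cp}_{\textup{trct}}$ then follows verbatim as in \Cref{thm_main_strong_tract2}: if $\vp^* \in \cp_{\textup{trct}}$ and $\vp' \in (\vp^*,\vinfty)$, pick $\vp,\tau$ with $\vp^* < \vp < \vp' = \tau\vp$; \eqref{eq:tract_def} holds for $\vp$, so necessity gives \eqref{eq:tractiff4} for $\vp'$, whence $\vp^* \in \widetilde{\cp}_{\textup{trct}}$; conversely sufficiency gives the reverse inclusion. The formula for $\cp_{\textup{opt}}$ is then immediate from \eqref{eq:Poptdef}. The only genuine obstacle — and it is minor — is bookkeeping around the $d$-dependent summation bound $\lceil L_{\vp} T(0,d,\vp)\rceil$: one must verify that in both directions this bound can be made to straddle $i(\infty,d,\vp)$ uniformly in $d$, which is exactly where the uniform lower bound $T(0,d,\vp) \ge T(0,1,\vzero) > 0$ from \eqref{eq:T0} is used to control the effect of the ceiling function and the constant shifts. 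Everything else is a transcription of the strong-tractability proof with $1 \rightsquigarrow d$.
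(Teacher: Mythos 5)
Your proposal is correct and follows essentially the same route as the paper: sufficiency via \eqref{eq:compUP} with $\thed=d$ and $N=\lceil L_{\vp}T(0,d,\vp)\rceil$, necessity via the index function $i(\varepsilon,d,\vp)$ together with \eqref{eq:ptauassume} and a zeta-function bound, and the identical optimality argument. The only nit is your illustrative choice $L_{\vp'}=C_{\vp}+1$, which suffices only when $T(0,1,\vzero)\ge 2$; the general choice should be on the order of $C_{\vp}+2/T(0,1,\vzero)$ (the paper takes $C_{\vp'}+2/T(0,1,\vp')$), exactly as the absorption mechanism you describe dictates.
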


Comparing the equivalent conditions for strong tractability in \eqref{eq:strong_tractiff3} and tractability in \eqref{eq:tractiff4}, they are seen to be nearly the same.  The difference lies in the lower summation index, which is allowed to depend on $d$ for tractability, and the second argument of $T$ in the sum, which is $1$ for strong tractability, and $d$ in general.  Both of these can allow $S_{\vp}$ to be finite in Theorem \ref{thm_main_tract2}, when it may be infinite in Theorem \ref{thm_main_strong_tract2}.

\begin{proof}
    \textbf{Sufficient condition:}\\
Suppose that \eqref{eq:tractiff4} holds for some $\vp \ge \boldsymbol{0}$.
By the argument in the proof of the sufficient condition for Theorem \ref{thm_main_strong_tract2}, it follows from \eqref{eq:compUP} with $\thed=d$ and $N = \lceil L_{\vp}\, T(0,d,\vp)\rceil$ that
\begin{align*}
       \comp(\varepsilon,d)
       & \le T(\varepsilon^{-1},d,\vp) \Biggl [
       \frac{\lceil L_{\vp}\, T(0,d,\vp)\rceil}{T(\varepsilon^{-1},d,\vp)}  +
       \underbrace{\sum_{i=\lceil L_{\vp}\, T(0,d,\vp)\rceil}^\infty \frac{1}{T(\lambda_{i, d}^{-1},d,\vp)}}_{\le S_{\vp} \text{ by \eqref{eq:tractiff4}}}
       \Biggr ]
        \quad \forall \varepsilon > 0, \ d  \in \naturals\\
       & \le T(\varepsilon^{-1},d,\vp)\left[\frac{L_{\vp} T(0,d,\vp)+1}{T(\varepsilon^{-1},d, \vp)} +  S_{\vp} \right]
        \qquad \forall \varepsilon > 0, \ d  \in \naturals\\
       & \le T(\varepsilon^{-1},d,\vp)\left[\frac{L_{\vp} T(0,d,\vp)}{T(\varepsilon^{-1},d, \vp)}
       + \frac{1}{T(0,1, \vp)} + S_{\vp}\right]
       \qquad \text{by \eqref{eq:Tnondecreasing}}   \\
       & \le T(\varepsilon^{-1},d,\vp)
       \underbrace{\left [  L_{\vp} + \frac{1}{T(0,1, \vp)} + S_{\vp} \right]}_{=:C_{\vp}}
       \qquad \text{by \eqref{eq:Tnondecreasing}} .
\end{align*}
It follows that we have $T$-tractability, which shows sufficiency of \eqref{eq:tractiff4}.

\bigskip

\noindent \textbf{Necessary condition:}\\
Suppose that we have
$T$-tractability. That is, for some $\vp \ge \vzero$, there exists a positive constant $C_{\vp}$ such that
\[
\comp(\varepsilon,d)\le C_{\vp}\, T(\varepsilon^{-1},d,\vp) \qquad \forall \varepsilon > 0,  \ d \in \naturals.
\]
Since the sequence of singular values $\lambda_{1,d}, \lambda_{2,d}, \ldots $ is non-increasing, we have
\begin{equation}\label{eq:lambda_K4}
	\lambda_{\lfloor C_{\vp}\, T(\varepsilon^{-1},d,\vp)\rfloor +1,d}\le \varepsilon \qquad \forall \varepsilon > 0,  \ d \in \naturals.
\end{equation}
Define the integers
\[
i (\varepsilon,d,\vp):= \lfloor C_{\vp}\, T(\varepsilon^{-1},d,\vp)\rfloor +1, \qquad
i (\infty,d,\vp) := \lfloor C_{\vp}\, T(0,d,\vp)\rfloor +1 \ge 1.
\]
Thus, it follows by \eqref{eq:lambda_K4} that $\lambda_{i(\varepsilon,d,\vp),d} \le \varepsilon$.
Note furthermore that we always have
\[
i(\varepsilon,d,\vp)\le C_{\vp}\, T(\varepsilon^{-1},d,\vp)+1 \le C_{\vp} T(\lambda_{i(\varepsilon,d,\vp),d}^{-1},d,\vp)+1 \qquad \forall \varepsilon > 0, \ d \in \naturals,
\]
since
$T(\cdot,d,\vp)$ is non-decreasing.

For $\varepsilon$ taking on all positive values, $i(\varepsilon,d,\vp)$ takes on, by \eqref{eq:Tlim}, all values greater than or equal to $i(\infty,d,\vp)$, so
\[
i\le C_{\vp}\, T(\varepsilon^{-1},d,\vp)+1 \le C_{\vp} T(\lambda_{i,d}^{-1},d,\vp)+1 \qquad \forall i \ge i(\infty,d,\vp),  \ d \in \naturals.
\]
This implies via our technical assumption \eqref{eq:ptauassume} that for any $\tau > 1$,
\begin{gather*}
	K_{\vp,\tau}\,T (\lambda_{i,d}^{-1},d,\tau \vp) \ge
	[T(\lambda_{i,d}^{-1},d, \vp)]^\tau
	\ge
	\left[\frac{(i-1)}{C_{\vp}}\right]^\tau \qquad \forall i \ge i(\infty,d,\vp),  \ d \in \naturals, \\
	 \frac{1}{T (\lambda_{i,d}^{-1},d,\tau \vp)} \le
	\frac{K_{\vp,\tau}\, C_{\vp}^\tau}{(i-1)^\tau}\qquad \forall i \ge \max\{i(\infty,d,\vp),2\},  \ d \in \naturals.
\end{gather*}

Let $\vp' = \tau \vp$.  Note that $i(\infty,d,\cdot)$ is non-decreasing.  Summing both sides of the latter inequality over $i$ from $i(\infty,d,\vp') + 1 \ge i(\infty,d, \vp) + 1 \ge 2$ to $\infty$ it follows that
\begin{equation*}
	\sup_{d\in\naturals} \, \, \sum_{i=i (\infty,d,\vp')+1}^\infty \frac{1}{T (\lambda_{i,d}^{-1},d,\vp')}
	 \le  K_{\vp,\tau}\, C_{\vp}^\tau
	\sum_{i=2}^\infty \frac{1}{(i-1)^\tau} \\
	 \le  K_{\vp,\tau}\, C_{\vp}^\tau
	\zeta (\tau)  < \infty,
\end{equation*}
where $\zeta$ denotes the Riemann zeta function.

Note now that, for any $d\in\naturals$,
\begin{align*}
 i(\infty,d,\vp') +1  & = \lfloor C_{\vp'}\, T(0,d,\vp')\rfloor +2 \le  \left[C_{\vp'} + \frac{2}{T(0,d,\vp')}   \right]\, T(0,d,\vp')\\
 & \le  \underbrace{\left[C_{\vp'} + \frac{2}{T(0,1,\vp')}   \right]}_{=:L_{\vp'}}\, T(0,d,\vp')
 \le \lceil L_{\vp'} T(0,d,\vp')\rceil .
\end{align*}
For this choice of $L_{\vp'}$ we have
\[
\sup_{d\in\naturals}\,\, \sum_{i=\lceil L_{\vp'}T(0,d,\vp')\rceil}^\infty \frac{1}{T(\lambda_{i,d}^{-1},d, \vp')}
\le
\sup_{d\in\naturals} \, \, \sum_{i=i (\infty,d,\vp')+1}^\infty \frac{1}{T (\lambda_{i,d}^{-1},d,\vp')} <
\infty.
\]
This yields \eqref{eq:tractiff4} with $\vp$ replaced by $\vp'$, so we see the necessity of $\eqref{eq:tractiff4}$.

\bigskip

\noindent \textbf{Optimality:} \\
This proof is similar to the proof for the optimality condition for strong tractability in Theorem \ref{thm_main_strong_tract2}. We must show that $\cp_{\textup{trct}} = \widetilde{\cp}_{\textup{trct}}$.  Then, the expression for  $\cp_{\textup{opt}}$ in this theorem automatically follows from its definition in \eqref{eq:Poptdef}.

First we show  that if $\vp^* \in \cp_{\textup{trct}}$, then $\vp^* \in \widetilde{\cp}_{\textup{trct}}$.  If $\vp^* \in \cp_{\textup{trct}}$, then \eqref{eq:tract_def} must hold for all $\vp' \in (\vp^*,\vinfty)$.  For any $\vp' \in (\vp^*,\vinfty)$, we may choose $\vp$ and $\tau$ such that $\vp^* < \vp < \vp' = \tau \vp$. Since $\vp > \vp^*$, \eqref{eq:tract_def} also holds for $\vp$, and it follows that \eqref{eq:tractiff4} must hold for $\vp'$ by the proof of necessity above.  Thus, $\vp^* \in \widetilde{\cp}_{\textup{trct}}$.

Next we show that if $\vp^* \in \widetilde{\cp}_{\textup{trct}}$, then $\vp^* \in \cp_{\textup{trct}}$.  If $\vp^* \in \widetilde{\cp}_{\textup{trct}}$, then \eqref{eq:tractiff4}  holds for all $\vp \in (\vp^*,\vinfty)$.  By the argument to prove the sufficient condition above, it follows that \eqref{eq:tract_def} must also hold for all $\vp \in (\vp^*,\vinfty)$. Thus, $\vp^* \in \cp_{\textup{trct}}$.

\bigskip
\noindent This concludes the proof of Theorem \ref{thm_main_tract2}.  
\end{proof}

\section{Examples} \label{sec:examples}
In this section, we present six examples relating to Theorem \ref{thm_main_strong_tract2} and Theorem \ref{thm_main_tract2}, both of which were shown in Section \ref{sec:spt}, for the various notions of tractability that are listed in Table \ref{tab:commonT}. In the subsequent sections, we will continue to present various examples.


The concept of (strong) polynomial tractability for both the algebraic case and the exponential case will be first considered as follows:


%

\begin{example}[Algebraic polynomial tractability]
Let the tractability function, $T$,
be defined by
\[
 T(\varepsilon^{-1},d,\vp)= \max\{1,\varepsilon^{-p}\}\, d^q
 \qquad \forall \varepsilon > 0, \,  d \in \naturals, \, \vp \in [0,\infty)^2,
\] where in this case $\vp = (p,q)$.
Then Theorem \ref{thm_main_tract2} yields the  equivalent condition for algebraic polynomial tractability:
\[
 \sup_{d\in\naturals}\, d^{-q} \sum_{i=\lceil L_{(p,q)} d^{q}\rceil}^\infty \min\{1,\lambda_{i,d}^p\}  < \infty \text{ for some } p,q \ge 0, L_{(p,q)} >0.
\]
Furthermore, for optimality, let $\widetilde{\cp}_{\textup{ALG-trct}} : = \{(p^*,q^*) : \text{the above condition holds }\forall (p,q) \in (p^*,\infty)\times(q^*,\infty) \}$. Therefore, the set of optimal $(p,q)$ is the set 
\[
    \{(p^*,q^*) \in \widetilde{\cp}_{\textup{ALG-trct}} : (p^*,q^*) \notin [\widetilde{p},\infty) \times [\widetilde{q},\infty)\, \, \forall (\widetilde{p},\widetilde{q}) \in \widetilde{\cp}_{\textup{ALG-trct}}\setminus \{p^*,q^*\}  \}.
\]
This essentially recovers the result on polynomial tractability in \cite[Theorem 5.1]{NovWoz08a}. 

Having defined the tractability function as above, we will now examine a specific case where we always set $d=1$, i.e.,
\[
T(\varepsilon^{-1},1,\vp)= \max\{1,\varepsilon^{-\vp}\}\qquad \forall \varepsilon > 0, \,  d \in \naturals, \, \vp \in [0,\infty),
\]
where in this case $\vp=p$ is a scalar.

Then Theorem \ref{thm_main_strong_tract2} yields the equivalent condition for algebraic strong polynomial tractability:
\[
\sup_{d\in\naturals} \sum_{i=L_p}^\infty \min\{1,\lambda_{i,d}^p\} < \infty \text{ for some } p \ge 0, L_p \in \naturals.
\]
Furthermore, the optimal $p$ is the infimum of all $p$ for which the above condition holds. This essentially recovers the result in \cite[Theorem 5.1]{NovWoz08a}.
\end{example}
Next, we study the exponential case where we replace $\varepsilon^{-1}$ by $\log(1+\varepsilon^{-1})$ and consider the same notions of tractability as for the algebraic case.
\begin{example}[Exponential polynomial tractability]
Let the tractability function, $T$,
be defined by
\[
 T(\varepsilon^{-1},d,\vp)=\left[\max\{1,\log(1+\varepsilon^{-1})\}\right]^p\, d^q \qquad \forall \varepsilon > 0, \,  d \in \naturals, \, \vp \in [0,\infty)^2,
\] where $\vp = (p,q)$. Then Theorem \ref{thm_main_tract2} yields  the equivalent condition for exponential polynomial tractability:
\begin{equation}\label{eq:exp_pt}
 \sup_{d\in\naturals}\, d^{-q} \sum_{i=\lceil L_{(p,q)} d^{q}\rceil}^\infty \frac{1}{[\max\{1,\log(1+\lambda_{i,d}^{-1})\}]^p} < \infty  \text{ for some } p,q \ge 0, L_{(p,q)} >0.
\end{equation}
Furthermore, for optimality, let $\widetilde{\cp}_{\textup{EXP-trct}} : = \{(p^*,q^*) : \text{the above condition holds }\forall (p,q) \in (p^*,\infty)\times(q^*,\infty) \}$. Therefore, the set of optimal $(p,q)$ is the set 
\[
    \{(p^*,q^*) \in \widetilde{\cp}_{\textup{EXP-trct}} : (p^*,q^*) \notin [\widetilde{p},\infty) \times [\widetilde{q},\infty)\, \, \forall (\widetilde{p},\widetilde{q}) \in \widetilde{\cp}_{\textup{EXP-trct}}\setminus \{p^*,q^*\}  \}.
\]
Again, we will look at the special case in which we always set $d=1$. In this case, we will define $T$ as follows.
\[
T(\varepsilon^{-1},1,p)= [\max\{1,\log(1+\varepsilon^{-1})\}]^{\vp} \quad \forall \varepsilon > 0, \,  d \in \naturals, \, p \in [0,\infty),\]
where $\vp=p$ is a scalar. Then Theorem \ref{thm_main_strong_tract2} yields the equivalent condition for exponential strong polynomial tractability:
\begin{equation}\label{eq:exp_spt}
\sup_{d\in\naturals} \sum_{i=L_{p} }^\infty \frac{1}{[\max\{1,\log(1+\lambda_{i,d}^{-1})\}]^p}< \infty,\quad  \text{ for some } p \ge 0, L_{p} \in \naturals.
\end{equation}
In this case, the optimal $p$ is the infimum of all $p$ for which the above condition holds.
\end{example}

In the paper \cite{KriWoz19a}, the authors derived conditions equivalent to exponential strong polynomial tractability and exponential polynomial tractability, respectively. These conditions read (with notation slightly adapted to our present paper) as follows:
\begin{itemize}
    \item We have exponential strong polynomial tractability iff there exists a $\tau>0$ and
    a $C\in\naturals$ such that
    \begin{equation}\label{eq:KW_exp_spt}
        \sup_{d\in\naturals} \sum_{i=C}^{\infty} \lambda_{i,d}^{i^{-\tau}} < \infty.
    \end{equation}
    \item We have exponential polynomial tractability iff there exist $\tau_1,\tau_3\ge 0$ and $\tau_2, C>0$ such that
    \begin{equation}\label{eq:KW_exp_pt}
        \sup_{d\in\naturals} d^{-\tau_1} \sum_{i=\lceil C d^{\tau_3} \rceil}^{\infty} \lambda_{i,d}^{i^{-\tau_2}} < \infty.
    \end{equation}
\end{itemize}

Firstly, we remark that the role of $L_p$ in  \eqref{eq:exp_spt} corresponds to that of $C$ in \eqref{eq:KW_exp_spt}.
Moreover, $q$ in \eqref{eq:exp_pt} has a similar role as $\tau_1$ and $\tau_3$ in \eqref{eq:KW_exp_pt} (i.e., \eqref{eq:KW_exp_pt} is slightly more precise than \eqref{eq:exp_pt} regarding the exponents of $d$, but this is just a minor difference); furthermore $L_{(p,q)}$ in \eqref{eq:exp_pt} corresponds to $C$ in \eqref{eq:KW_exp_pt}.

Secondly, it should be noted that our condition \eqref{eq:exp_spt} is easier to check in practice than condition \eqref{eq:KW_exp_spt}. Since in \eqref{eq:KW_exp_spt} the summation indices $i$ show up in the exponents of the summands, we need the exact ordering of the singular values $\lambda_{i,d}$, whereas in \eqref{eq:exp_spt} we only have a summation of the values of $1/T(\lambda_{i,d}^{-1},d,\vp)$, and the exact order of the singular values can be neglected. A similar
observation holds for \eqref{eq:exp_pt} and \eqref{eq:KW_exp_pt}.

The following proposition states that the conditions \eqref{eq:exp_spt} and \eqref{eq:KW_exp_spt}, as well as the conditions \eqref{eq:exp_pt} and \eqref{eq:KW_exp_pt}, are indeed equivalent. 
The proof of this result is deferred to Appendix 2 where we will prove this statement for the latter two conditions, i.e., for the conditions on exponential polynomial tractability. The case of exponential strong polynomial tractability is not explicitly included, as it is actually simpler and can be treated analogously.  We give a short example of how we can apply the condition \eqref{eq:exp_spt} to a concrete problem in Appendix 1.

\begin{proposition}\label{prop:equiv_conditions}
The conditions \eqref{eq:exp_spt} and \eqref{eq:KW_exp_spt} are equivalent. Furthermore, the conditions \eqref{eq:exp_pt} and \eqref{eq:KW_exp_pt} are equivalent.
\end{proposition}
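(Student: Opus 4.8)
The plan is to funnel all four implications through a single uniform decay condition on the singular values. Write $(\star)$ for the statement: there exist $c>0$, $\alpha>0$, $\beta\ge 0$ and $e_0\ge 1$ such that $\lambda_{i,d}\le \exp(-c\,i^\alpha)$ for every $d\in\naturals$ and every $i\ge e_0\, d^\beta$. I would prove $\eqref{eq:exp_pt}\iff(\star)\iff\eqref{eq:KW_exp_pt}$, and then obtain $\eqref{eq:exp_spt}\iff\eqref{eq:KW_exp_spt}$ as the special case $q=\tau_1=\tau_3=0$, where all summation indices and $d$-power prefactors become constants. Throughout, positivity of the $\lambda_{i,d}$ (a standing assumption) legitimizes taking logarithms, and $\log(1+x)\ge\log x$ for $x>0$ lets us pass freely between $\log(1+\lambda_{i,d}^{-1})$ and $\log\lambda_{i,d}^{-1}$.

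The implications $(\star)\Rightarrow\eqref{eq:exp_pt}$ and $(\star)\Rightarrow\eqref{eq:KW_exp_pt}$ are pure substitution. Under $(\star)$, for $i\ge e_0 d^\beta$ we have $\log\lambda_{i,d}^{-1}\ge c\,i^\alpha$, hence $\max\{1,\log(1+\lambda_{i,d}^{-1})\}\ge c\,i^\alpha$ and $\lambda_{i,d}^{\,i^{-\tau_2}}=\exp\!\big(-i^{-\tau_2}\log\lambda_{i,d}^{-1}\big)\le\exp(-c\,i^{\alpha-\tau_2})$. Picking $p$ with $p\alpha>1$ makes $\sum_{i\ge\lceil e_0 d^\beta\rceil}[\max\{1,\log(1+\lambda_{i,d}^{-1})\}]^{-p}\le\sum_{i\ge 1}(c\,i^\alpha)^{-p}<\infty$ uniformly in $d$, so $\eqref{eq:exp_pt}$ holds with $q=\beta$ and $L_{(p,q)}=e_0$; picking $0<\tau_2<\alpha$ makes $\sum_{i\ge\lceil e_0 d^\beta\rceil}\lambda_{i,d}^{\,i^{-\tau_2}}\le\sum_{i\ge 1}\exp(-c\,i^{\alpha-\tau_2})<\infty$ uniformly, so $\eqref{eq:KW_exp_pt}$ holds with $\tau_1=0$, $\tau_3=\beta$, $C=e_0$. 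The $d^{-q}$ and $d^{-\tau_1}$ factors are only helpful here and may be taken trivial.

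For the converses I would run a dyadic blocking argument, doing the harder case $\eqref{eq:KW_exp_pt}\Rightarrow(\star)$ in detail. Fix $d$; if $\lambda_{n,d}\ge 1$ then each summand from $\lceil Cd^{\tau_3}\rceil$ up to $n$ is $\ge 1$, so $d^{-\tau_1}\big(n-\lceil Cd^{\tau_3}\rceil+1\big)\le S$ forces $n\le D_d$ for some $D_d$ bounded by a fixed power of $d$; thus $\lambda_{i,d}<1$ for all $i>D_d$. For $n>D_d$ (also at least the lower summation index), I bound the block $\sum_{i=n}^{2n}\lambda_{i,d}^{\,i^{-\tau_2}}\ge(n+1)\lambda_{2n,d}^{\,n^{-\tau_2}}$: on the block $x\mapsto x^{s}$ decreases in $s$ for $x\in(0,1)$, so $\lambda_{i,d}^{\,i^{-\tau_2}}\ge\lambda_{i,d}^{\,n^{-\tau_2}}$ (since $i^{-\tau_2}\le n^{-\tau_2}$), and $x\mapsto x^{n^{-\tau_2}}$ increases, so $\lambda_{i,d}^{\,n^{-\tau_2}}\ge\lambda_{2n,d}^{\,n^{-\tau_2}}$. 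Hence $\lambda_{2n,d}^{\,n^{-\tau_2}}\le S d^{\tau_1}/(n+1)$, and taking logarithms $\log\lambda_{2n,d}^{-1}\ge n^{\tau_2}\log\!\big((n+1)/(S d^{\tau_1})\big)\ge n^{\tau_2}$ once $n\ge (S d^{\tau_1})^{2}$; passing from even indices to all indices via monotonicity of $\lambda_{\cdot,d}$ gives $(\star)$ with $\alpha=\tau_2$ and $\beta=\max\{2\tau_1,\tau_3\}$. The implication $\eqref{eq:exp_pt}\Rightarrow(\star)$ is identical in spirit but easier, since $[\max\{1,\log(1+\lambda_{i,d}^{-1})\}]^{-p}$ is genuinely non-increasing in $i$, so the block bound is immediate; it yields $\log\lambda_{2n,d}^{-1}\ge c\,(n/d^q)^{1/p}$ for $n$ larger than a fixed multiple of $d^q$, and one finishes using $(n/d^q)^{1/p}\ge n^{1/(2p)}$ whenever $n\ge d^{2q}$.

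The one step that needs real care is $\eqref{eq:KW_exp_pt}\Rightarrow(\star)$: the summand $\lambda_{i,d}^{\,i^{-\tau_2}}$ is \emph{not} monotone in $i$ --- the base decreases while the exponent $i^{-\tau_2}$ also decreases, driving the power toward $1$ --- so one cannot lower-bound a block by (length)$\times$(last term) without first establishing $\lambda_{i,d}<1$ on the block, which is exactly the role of the preliminary bound $D_d\le c_1 d^{\max\{\tau_1,\tau_3\}}$ and of the two-step comparison above. Beyond that, the work is bookkeeping: every constant and every polynomial-in-$d$ threshold must be tracked uniformly in $d$. For the exponential strong polynomial case all indices and prefactors are constants ($\beta=0$), the blocking argument simplifies accordingly, and I would merely indicate the (minor) modifications.
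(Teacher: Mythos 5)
Your proposal is correct, and it is organized differently from the paper's proof in a way worth noting. The paper proves the two implications for \eqref{eq:exp_pt} $\iff$ \eqref{eq:KW_exp_pt} directly: for \eqref{eq:KW_exp_pt} $\Rightarrow$ \eqref{eq:exp_pt} it never derives a pointwise decay bound on the singular values, but instead splits the sum according to the size of the summand $\lambda_{i,d}^{i^{-\tau_2}}$ (a ``bad set'' $B_d$ of indices with $\lambda_{i,d}^{i^{-\tau_2}}>1/e$, whose cardinality is at most $eMd^{\tau_1}$, plus the remaining indices, for which $\log\lambda_{i,d}^{-1}\ge i^{\tau_2}$ holds term by term); for the converse it uses the Chebyshev-type averaging bound from the proof of Theorem~\ref{thm_main_strong_tract2} to obtain $\lambda_{n,d}\le\exp(-n^{1/p}/(\widetilde{K}^{1/p}d^{q/p}))$ and then cites \cite{KriWoz19a} for the final summation estimate. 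You instead funnel everything through a single uniform decay condition $(\star)$, which forces you to supply a self-contained argument for \eqref{eq:KW_exp_pt} $\Rightarrow(\star)$; your dyadic-block comparison, together with the preliminary step showing $\lambda_{i,d}<1$ beyond a polynomial-in-$d$ index (which is exactly what makes the two-step monotonicity comparison on a block legitimate despite the non-monotonicity of $i\mapsto\lambda_{i,d}^{i^{-\tau_2}}$), is sound and plays the role of both the paper's averaging step and its citation of \cite{KriWoz19a}. What your route buys is symmetry and reusability: all four implications (and the strong versions \eqref{eq:exp_spt} $\iff$ \eqref{eq:KW_exp_spt} as the $\beta=0$ case) reduce to one condition, and you avoid any external reference. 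What it costs is a loss of sharpness in the exponents — absorbing the $d$-dependence into the threshold via $(n/d^q)^{1/p}\ge n^{1/(2p)}$ for $n\ge d^{2q}$ halves the decay rate and inflates $\beta$, whereas the paper's bookkeeping tracks the natural exponents $\tau_1=q/(1-\tau_2 p)$, $\tau_3=q$. Since the proposition only asserts equivalence of existential statements, this looseness is harmless. Two cosmetic points: the threshold for $\log((n+1)/(Sd^{\tau_1}))\ge 1$ should be $n\ge eSd^{\tau_1}$ rather than $(Sd^{\tau_1})^2$ unless you first normalize $S\ge e$, and passing from $\log(1+\lambda^{-1})$ down to $\log\lambda^{-1}$ in the upper-bound direction requires $\lambda\le 1$, which your preliminary step does supply but which should be said explicitly.
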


Next, we provide a generalization of the above examples of tractability. We would like to emphasize that this example was previously known (see, e.g., \cite{GW06}) where in this case the roles of $\varepsilon^{-1}$ and $d$ are separated. In order to accomplish this, we will first define two functions $\phi$ and $\psi$ with positive values that are increasing in both of their arguments. The equivalent conditions for this example take the same form as those in Theorems \ref{thm_main_strong_tract2} and \ref{thm_main_tract2}.

\begin{example}[(Strong) Separable tractability]
Suppose that the tractability function is defined by
\[
 T(\varepsilon^{-1},d,\vp)= \phi(\varepsilon^{-1},p)\, \psi(d,q)
 \qquad \forall \varepsilon > 0, \,  d \in \naturals, \, \vp \in [0,\infty)^2,
\]
with non-decreasing functions $\phi : (0,\infty) \times (0,\infty) \to (0,\infty)$ and $\psi : \naturals  \times (0,\infty) \to (0,\infty)$ and where $\vp = (p,q)$, hence $T$ is non-decreasing in all variables.

We say that a problem is $T$-separably tractable with parameter $\vp$ iff there exists a positive constant $C_{\vp}$ which does not depend on $\varepsilon$ and $d$ such that
\[
\comp(\varepsilon,d) \leq  C_{\vp} \,  \phi(\varepsilon^{-1},p)\, \psi(d,q)\qquad \forall \varepsilon >0, \, d\in\naturals.
\]
A problem is strongly $T$-separably tractable with parameter $\vp$ iff the information complexity is independent of $d$, that is, there exists a positive constant $C_{\vp}$ which again does not depend on $\varepsilon$ and $d$ such that
\[
\comp(\varepsilon,d) \leq C_{\vp}\,\phi(\varepsilon^{-1},p)\qquad \forall \varepsilon >0, \, d\in\naturals,
\] where in this case $\vp = p$ is a scalar.
Furthermore, we assume the following, which corresponds to the conditions in \eqref{eq:Tlim}, \eqref{eq:T0}, and \eqref{eq:ptauassume}. We assume that
\[
\lim_{\varepsilon\rightarrow0}\phi(\varepsilon^{-1},p) = \infty \qquad \forall d\in\naturals, p >0,
\]
since there are an infinite number of positive singular values. Furthermore, we have that $\phi(\cdot,p)$ is non-decreasing, hence we define the following limit
\[
\lim_{\varepsilon \rightarrow \infty}\phi(\varepsilon^{-1},p) >0.
\]
Additionally, we have the following technical assumptions. Suppose that, for any real $\tau_1, \tau_2>0$ and any $p,q>0$, we have that
\begin{align*}
     [\phi(\varepsilon^{-1},p)]^{\tau_1} \le \hat{K}_{p,\tau_1} \phi (\varepsilon^{-1},\tau_1 p)
     \quad \mbox{and} \quad
      [\psi(d,q)]^{\tau_2} \le \hat{K}_{q,\tau_2} \psi (d,\tau_2 q) \qquad \forall \, \varepsilon >0,\, d\in \naturals,
\end{align*} where $\hat{K}_{p,\tau_1}>0$ is a constant that may depend on $p$ and $\tau_1$, but is independent of $\varepsilon$, and where $\hat{K}_{q,\tau_2}>0$ is a constant that may depend on $q$ and $\tau_2$, but is independent of $d$.

Theorem \ref{thm_main_strong_tract2} yields the following equivalent condition for strong $T$-separable tractability:
\begin{equation*} \label{eq:strong_tractiff}
     \sup_{d \in \naturals} \sum_{i = L_p}^\infty \frac{1}{\phi(\lambda_{i, d}^{-1},p)} < \infty,
\end{equation*}
for some $p>0$ and $L_p \in \naturals$. Furthermore, the optimal $p$ is also the infimum of all $p$ satisfying the latter condition. Also,
Theorem \ref{thm_main_tract2} yields the following equivalent condition for $T$-separable tractability:
\[
 \sup_{d \in \naturals} \frac{1}{\psi(d,q)} \sum_{i = \lceil L_{(p,q)}\psi(d,q)\rceil}^\infty \frac{1}{\phi(\lambda_{i, d}^{-1},p)} < \infty,
\] for some $p,q >0$ and a positive constant $L_{(p,q)}$.
\end{example}
Next, we introduce another notion of tractability called \emph{non-separable tractability}. See \cite{GW08}, which is probably where the term ``non-separable tractability" was first introduced.
In the following, we will define two functions $\hat{\phi}$ and $\hat{\psi}$ that take the roles of $\varepsilon^{-1}$ and $d$. As will be seen below, this kind of tractability function includes the special case of the so-called \emph{quasi-polynomial tractability}.
\
\begin{example}[Non-separable tractability]
    Suppose we define the tractability function $T$ by
\[
 T(\varepsilon^{-1},d,p)= \exp (p \, \hat{\phi}(\varepsilon^{-1})\, \hat{\psi}(d)) = [\exp( \hat{\phi}(\varepsilon^{-1}))]^{p \, \hat{\psi}(d)}
 \qquad \forall \varepsilon > 0, \,  d \in \naturals, \, p\in [0,\infty),
\]  with non-decreasing functions $\hat{\phi} : (0,\infty) \to (0,\infty)$ and $\hat{\psi} : \naturals \to (0,\infty)$.
We say that a problem is non-separably tractable with scalar parameter $p$ iff there exists a positive $C_{p}$ such that

\[\comp(\varepsilon,d)
\leq  C_{p}\exp (p(\hat{\phi}(\varepsilon^{-1})\hat{\psi}(d))).\]

Hence we can apply Theorem \ref{thm_main_tract2} as follows. We have the following equivalent condition for non-separable tractability:
\[
\sup_{d\in\mathbb{N}}\sum_{i=\lceil L_p\,\hat{\phi}(0)^{p \, \hat{\psi}(d)}\rceil} [\exp ( \hat{\phi}(\lambda_{i,d}^{-1}))]^{-p \, \hat{\psi}(d)}\quad \text{for some } p \geq 0, L_p >0.
\]

\end{example}
For $\hat{\phi}$ and $\hat{\psi}$ of special form, the condition above can be simplified further as in the following two special cases of non-separable tractability.

\begin{example}[Algebraic quasi-polynomial tractability]
Let the tractability function, $T$,
be defined by
\[
 T(\varepsilon^{-1},d,p)= \exp\{ p(1+\log(\max\{1,\varepsilon^{-1}\}))(1+\log (d))\}  \qquad \forall \varepsilon > 0, \,  d \in \naturals, \, p \in [0,\infty).
\] We note that the choice of $T$ is equivalent to the definition in \cite{GneWoz11a}. This can be rewritten as
\[
T(\varepsilon^{-1},d,p) = e^p(\max\{1,\varepsilon^{-1}\})^{p(1+\log (d))}d^p.
\]

Then Theorem \ref{thm_main_tract2} yields the following equivalent condition for quasi-polynomial tractability in the algebraic case:
\[
\sup_{d\in\mathbb{N}}\,d^{-p}\sum_{i=\lceil L_p\,d^{p}\rceil} \frac{1}{\left(\max\{1,\lambda_{i,d}\}\right)^{p(1+\log (d))}} < \infty \qquad \text{for some } p \geq 0, L_p >0.\]
\end{example}
This essentially recovers \cite[Theorem 23.1]{NovWoz12a}.

Next, we will consider the exponential case as follows.
\begin{example}[Exponential quasi-polynomial tractability]

\noindent Suppose that the tractability function, $T$, is defined by
\[
 T(\varepsilon^{-1},d,p)= \exp \{p(1+\log(\max\{1,\log(1+\varepsilon^{-1})\}))(1+\log (d))\}  \qquad \forall \varepsilon > 0, \,  d \in \naturals, \, p\in [0,\infty).
\] This can be rewritten as
\[
T(\varepsilon^{-1},d,p) = e^p(\max\{1,\log(1+\varepsilon^{-1})\})^{p(1+\log (d))}d^p.
\]
Hence we can apply Theorem \ref{thm_main_tract2} as follows. We have the following equivalent condition for quasi-polynomial tractability in the exponential case:
\[
\sup_{d\in\mathbb{N}}\,d^{-p}\sum_{i=\lceil L_p\,d^{p}\rceil} \frac{1}{\left(\max\{1,\log(1+\lambda_{i,d}^{-1})\} \right)^{p(1+\log (d))}}< \infty \quad \text{for some } p \geq 0, L_p >0.
\]
This essentially recovers \cite[Theorem 2]{KriWoz19a}.
\end{example}


\section{Sub-$h$ Tractability}\label{sec:subh}
In this section we generalize the concept of weak tractability (see again \cite{NovWoz08a}--\cite{NovWoz12a}).  Let  $h:[0,\infty) \to [1,\infty)$ satisfy
\begin{equation} \label{eq:h_cond}
	h \text{ is strictly increasing}, \qquad
	h(0)=1, \qquad
 h(x+y) \ge  h(x)\cdot h(y)  \quad \forall x,y \ge 0.
\end{equation}
This means that $\log(h)$ is a superadditive function.  An example of the function $h$ is the exponential function, from which we can recover weak tractability.

Furthermore, let us denote the inverse function of $h$ by $h^{-1}$.
Then, for arbitrary $z,w \in [1,\infty)$ we choose $x=h^{-1}(z)$ and $y=h^{-1}(w)$, to obtain from \eqref{eq:h_cond}
\[
z w \le h (h^{-1}(z)+h^{-1}(w)),
\]
which yields by the monotonicity of $h^{-1}$ that
\begin{equation}\label{eq:submult_h_inverse}
h^{-1}(z w) \le h^{-1}(z)+h^{-1}(w).
\end{equation}
Now we define a generalized notion of weak tractability.
\begin{definition} \label{def:subhT}
	A problem is sub-$h$-$T$ tractable for parameter $\vp > \vzero$ if
	\begin{equation*}
		\lim_{\varepsilon^{-1} + d \to \infty} \frac{h^{-1}(\max(1,\comp(\varepsilon,d)))}{T(\varepsilon^{-1},d,\vp)} = 0.
	\end{equation*}
\end{definition}

Note that this definition of sub-$h$-$T$ tractability implies that
\begin{equation} \label{eq:comp_slow_hcT}
		\lim_{\varepsilon^{-1} + d \to \infty} \frac{\comp(\varepsilon,d)}{h(cT(\varepsilon^{-1},d,\vp))} = 0 \qquad \forall c > 0,
\end{equation}
which means that for $\varepsilon \to 0$ or $d \to \infty$  $\comp(\varepsilon,d)$ must increase slower than $h(cT(\varepsilon^{-1},d,\vp))$, no matter how small $c$ is.

\begin{theorem}\label{thm:subhT}
	Let $T$ be a tractability function as specified in \eqref{eq:Tspec} and satisfying \eqref{eq:Tconditions}.  Moreover, assume that 
    $\lim_{\varepsilon^{-1} + d \to \infty} T(\varepsilon^{-1},d,\vp) = \infty$
    for all $\vp > \vzero$.  The problem is sub-$h$-$T$ tractable if and only if
	\[
	S_c = S_{c,\vp} :=\sup_{d\in\naturals}  \, \sum_{i=1}^\infty \frac1{h(c\, T(\lambda_{i,d}^{-1},d,\vp))} <\infty \qquad \forall c > 0.
	\]
\end{theorem}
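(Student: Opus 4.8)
The plan is to mirror the structure of the proofs of \Cref{thm_main_strong_tract2,thm_main_tract2}: establish sufficiency by converting a bound on the sum into a bound on $\comp$, and establish necessity by converting sub-$h$-$T$ tractability into a tail estimate on the sum. The key technical tools are the submultiplicativity of $h$ (equivalently, superadditivity of $\log h$) in \eqref{eq:h_cond}, the subadditivity of $h^{-1}$ in \eqref{eq:submult_h_inverse}, the characterization \eqref{eq:comp_def} of $\comp$, and the monotonicity assumptions \eqref{eq:Tconditions} on $T$; note that \eqref{eq:ptauassume} is \emph{not} assumed here, so the argument must avoid the $\tau$-rescaling trick used previously.

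For \textbf{sufficiency}, assume $S_{c,\vp}<\infty$ for all $c>0$ and fix $c>0$. As in the earlier proofs, since $\lambda_{i,d}$ is non-increasing in $i$, the terms $1/h(cT(\lambda_{i,d}^{-1},d,\vp))$ are non-increasing in $i$, so for $n=\comp(\varepsilon,d)$ we have $\lambda_{n,d}>\varepsilon$ (when $n\ge1$), hence
\[
n \cdot \frac{1}{h(cT(\varepsilon^{-1},d,\vp))} \le \sum_{i=1}^{n}\frac{1}{h(cT(\lambda_{i,d}^{-1},d,\vp))} \le S_{c,\vp},
\]
using monotonicity of $T$ in its first argument and of $h$. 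This gives $\comp(\varepsilon,d)\le S_{c,\vp}\, h(cT(\varepsilon^{-1},d,\vp))$, so $h^{-1}(\max(1,\comp(\varepsilon,d)))\le h^{-1}(S_{c,\vp})+cT(\varepsilon^{-1},d,\vp)$ by \eqref{eq:submult_h_inverse} (splitting $S_{c,\vp}\,h(cT)$ as a product and noting $h^{-1}(h(cT))=cT$). Dividing by $T(\varepsilon^{-1},d,\vp)$ and letting $\varepsilon^{-1}+d\to\infty$, the first term vanishes because $T\to\infty$, and the second is $c$; since $c>0$ was arbitrary, the limit is $0$, i.e., sub-$h$-$T$ tractability.

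For \textbf{necessity}, assume sub-$h$-$T$ tractability; we must show $S_{c,\vp}<\infty$ for every $c>0$. Fix $c>0$. Via \eqref{eq:comp_slow_hcT} with $2c$ in place of $c$, there is a threshold so that $\comp(\varepsilon,d)\le h(2c\,T(\varepsilon^{-1},d,\vp))$ once $\varepsilon^{-1}+d$ is large. Translating this into a statement about singular values as in \eqref{eq:lambda_K_strong3}: setting $i(\varepsilon,d)=\lfloor h(2cT(\varepsilon^{-1},d,\vp))\rfloor+1$, we get $\lambda_{i(\varepsilon,d),d}\le\varepsilon$, hence $i\le h(2cT(\lambda_{i,d}^{-1},d,\vp))+1$ for all $i$ above some index $i_0(d)$. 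The crucial point, and the \textbf{main obstacle}, is controlling $i_0(d)$ uniformly in $d$ and converting $i\lesssim h(2cT(\lambda_{i,d}^{-1},d,\vp))$ into summability of $1/h(cT(\lambda_{i,d}^{-1},d,\vp))$: here one uses the superadditivity $\log h(2x)\ge 2\log h(x)$, i.e. $h(2x)\ge h(x)^2$, which gives $h(cT(\lambda_{i,d}^{-1},d,\vp))\ge \big(h(2cT(\lambda_{i,d}^{-1},d,\vp))\big)^{1/2}\cdot$(a constant)$^{-1}$-type bound — more carefully, from $i-1\le h(2cT(\lambda_{i,d}^{-1},d,\vp))$ one wants $h(cT(\lambda_{i,d}^{-1},d,\vp))\gtrsim (i-1)$ does not suffice, so instead apply the limit hypothesis once more to absorb a further factor, or argue that $h(2cT)\ge h(cT)h(cT)$ forces $h(cT(\lambda_{i,d}^{-1},d,\vp))\ge\sqrt{i-1}$ only; to get a summable bound one should instead start from \eqref{eq:comp_slow_hcT} with $c$ replaced by a small multiple and iterate, yielding $h(\tfrac{c}{k}T)\gtrsim (i-1)$ for the relevant $k$, or more cleanly: for the fixed target exponent, choose the free constant in \eqref{eq:comp_slow_hcT} to be $c/2$, deduce $i-1\le h(2\cdot\tfrac c2 T(\lambda_{i,d}^{-1},d,\vp))$, hence by $h(2x)\ge h(x)^2$ that $h(\tfrac c2 T(\lambda_{i,d}^{-1},d,\vp))\le (i-1)$ is false in the wrong direction — the correct manipulation is $\sqrt{i-1}\le h(\tfrac c2 T(\lambda_{i,d}^{-1},d,\vp))$, insufficient. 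The resolution is to use \eqref{eq:comp_slow_hcT} with the constant $c/N$ for a large integer $N$, giving $i-1\le h(N\cdot\tfrac cN T)$ and $h(N x)\ge h(x)^N$, hence $h(cT(\lambda_{i,d}^{-1},d,\vp))^{1/N}\cdot C \ge$... — concretely, for any desired $c$ pick $N\ge 2$, apply the limit hypothesis with constant $c$ to get eventually $i-1\le h(cT(\lambda_{i,d}^{-1},d,\vp))$, wait — this shows only $\sum 1/(i-1)$ diverges. The genuinely correct route, which I would adopt, is: from $i-1 \le h(cT(\lambda_{i,d}^{-1},d,\vp))$ for \emph{every} $c>0$ and $i$ large (depending on $c$), pick the value $c/2$ to obtain $h(cT(\lambda_{i,d}^{-1},d,\vp)) = h(2\cdot\tfrac c2 T(\lambda_{i,d}^{-1},d,\vp)) \ge h(\tfrac c2 T(\lambda_{i,d}^{-1},d,\vp))^2 \ge (i-1)^2$ for $i$ large, whence $\sum_i 1/h(cT(\lambda_{i,d}^{-1},d,\vp)) \le \sum_i (i-1)^{-2} + (\text{finitely many terms})$, and the finitely-many-terms count and bound must be shown uniform in $d$ using that $h$ is bounded below by $1$ and $T\ge T(0,1,\vzero)>0$. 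Finally, check the uniform tail index $i_0(d)$ is uniformly bounded: this follows because \eqref{eq:comp_slow_hcT} is a genuine limit as $\varepsilon^{-1}+d\to\infty$, so the exceptional region is a bounded set of pairs $(\varepsilon^{-1},d)$; on that bounded set $d$ is bounded and the finitely many sums are each finite (the problem is solvable), and we take the max.
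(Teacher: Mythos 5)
Your sufficiency argument is correct and is essentially the paper's: the monotone-terms inequality gives $\comp(\varepsilon,d)\le \max(1,S_{c,\vp})\,h(cT(\varepsilon^{-1},d,\vp))$, and then \eqref{eq:submult_h_inverse} together with $T\to\infty$ finishes (only replace $h^{-1}(S_{c,\vp})$ by $h^{-1}(\max(1,S_{c,\vp}))$, since $h^{-1}$ is defined on $[1,\infty)$ and $S_{c,\vp}$ need not exceed $1$). The route you finally settle on for the \emph{tail} of the sum in the necessity direction is also the right one: invoke sub-$h$-$T$ tractability with the constant $c/2$, use $h(2x)\ge h(x)^2$ from \eqref{eq:h_cond} to get $h(cT(\lambda_{i,d}^{-1},d,\vp))\ge (i-1)^2$ beyond a threshold index, and sum $1/(i-1)^2$.

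The genuine gap is in how you dispose of the indices below that threshold. You assert that the threshold $i_0(d)$ is uniformly bounded in $d$ because the exceptional set of pairs with $\varepsilon^{-1}+d<V_c$ is bounded; but that set is not what controls $i_0(d)$. Even for $d\ge V_c$, where every $\varepsilon$ is admissible, the inequality $i\le h(\tfrac c2 T(\lambda_{i,d}^{-1},d,\vp))+1$ is only obtained for $i\ge n^*(d,c)$, where $n^*(d,c)$ is roughly $h(\tfrac c2 T(\lambda_{1,d}^{-1},d,\vp))+1$, because the map $\varepsilon\mapsto\lfloor h(\tfrac c2T(\varepsilon^{-1},d,\vp))\rfloor+1$ never attains values below that. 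Whenever $T(0,d,\vp)\to\infty$ as $d\to\infty$ (e.g.\ weak tractability with $T=\max(1,\varepsilon^{-1})^s+d^t$), this threshold is unbounded in $d$, so ``take the max over finitely many $d$'' fails, and bounding each of the $n^*(d,c)-1$ initial terms by a constant does not yield a uniform bound either, since their number grows. The missing idea, which the paper supplies, is to pair the \emph{count} of initial terms with their \emph{size}: each initial term is at most $1/h(cT(\lambda_{1,d}^{-1},d,\vp))\le 1/[h(\tfrac c2T(\lambda_{1,d}^{-1},d,\vp))]^2$, while the count is at most $h(\tfrac c2 T(\varepsilon^{-1}_{\max}(d,c),d,\vp))+1$ with $\varepsilon^{-1}_{\max}(d,c)=\max(V_c-d,\lambda_{1,d}^{-1})$; a short case analysis on which of $V_c-d$ and $\lambda_{1,d}^{-1}$ is larger then bounds the initial block by a constant depending only on $c$ (roughly $h(\tfrac c2T(V_c,\lfloor V_c\rfloor,\vp))+1$). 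Without this step the necessity direction is incomplete. Separately, a final write-up should not retain the sequence of discarded attempts that your necessity paragraph currently records; only the argument that works belongs in the proof.
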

\begin{proof}
The idea of this proof follows \cite{WerWoz17a}.
We suppress the $\vp$ dependence in the proof, because it is insignificant.

\bigskip

\noindent \textbf{Sufficient condition:}\\
Suppose that $S_c <\infty$ for all $c>0$.  Since the terms of the series in the definition of $S_c$ are non-increasing, we have
	\[
		\frac{n}{h(cT(\lambda_{n,d}^{-1},d))}\le S_c \qquad \forall n,d \in \naturals,
	\]
or equivalently
\begin{equation} \label{eq:Sclb}
	n\le S_c \, h(cT(\lambda_{n,d}^{-1},d))   \qquad \forall n,d \in \naturals.
\end{equation}
This implies that for the special choice
\[
	n = \left\lceil
	S_c \, h(cT(\varepsilon^{-1},d))
	\right\rceil \ge 1
\]
	we have
	\[
	T(\lambda_{n,d}^{-1},d) \ge T(\varepsilon^{-1},d) \qquad \forall \varepsilon > 0, \, d \in \naturals.
	\]
Since $T(\cdot,d)$ is non-decreasing, it follows that $\lambda_{n,d} \le \varepsilon$, and thus
\[
\comp(\varepsilon,d) \le  \left\lceil
S_c \, h(cT(\varepsilon^{-1},d))
\right\rceil \le   \lceil
S_c \, h(cT(\lambda_{1,1}^{-1},1)) \, h(cT(\varepsilon^{-1},d))
\rceil\qquad \forall c, \varepsilon > 0, \, d \in \naturals
\]
because $h$ is no smaller than $1$.

As $S_c \ge 1/h(cT(\lambda_{1,1}^{-1},1))$ by \eqref{eq:Sclb}, it follows that $S_c \, h(cT(0,1)) \, h(cT(\varepsilon^{-1},d))  \ge 1$.  Define $\widetilde{S_c}: = 2 S_c \, h(cT(\lambda_{1,1}^{-1},1))$, and note that $\widetilde{S}_c$ must also be finite for all $c > 0$.  It follows that
\begin{equation*}
\max(1,\comp(\varepsilon,d))
 \le \lceil
(\widetilde{S}_c/2) \, h(cT(\varepsilon^{-1},d))
\rceil  \le   \widetilde{S}_c \, h(cT(\varepsilon^{-1},d))
\qquad\forall c, \varepsilon > 0, \, d \in \naturals.
\end{equation*}
Since $h$ is strictly increasing, so is $h^{-1}$, and thus by \eqref{eq:submult_h_inverse},
\begin{align*}
h^{-1}(\max(1,\comp(\varepsilon,d)) ) & \le  h^{-1} \left(
\widetilde{S}_c \, h(cT(\varepsilon^{-1},d))  \right)
\\
&  \le  h^{-1}  (\widetilde{S}_c) + cT (\varepsilon^{-1},d)
\quad \forall c, \varepsilon > 0, \, d \in \naturals.
\end{align*}

By the hypothesis of this theorem, $\lim_{\varepsilon^{-1} + d \to \infty} T(\varepsilon^{-1},d) = \infty$. Therefore
\begin{equation*}
	\lim_{\varepsilon^{-1} + d \to \infty} \frac{h^{-1}(\max(1,\comp(\varepsilon,d)))}{T(\varepsilon^{-1},d)}
	\le \lim_{\varepsilon^{-1} + d \to \infty} \frac{h^{-1} (2 S_c)}{T(\varepsilon^{-1},d)}  + c = c \qquad\forall c >  0.
\end{equation*}
Since this limit is bounded above by all positive $c$, it must be zero, and the problem is sub-$h$-$T$ tractable.

\bigskip

\noindent \textbf{Necessary condition:}\\
Suppose that the problem is  sub-$h$-$T$ tractable. Then by \eqref{eq:comp_slow_hcT}, for any $c>0$ there exists a positive integer $V_c$ such that
\begin{equation*}
	\comp(\varepsilon,d) \le \left\lfloor h(cT(\varepsilon^{-1},d)) \right\rfloor \qquad \forall \varepsilon^{-1} + d \ge V_c.
\end{equation*}
By the definition of the information complexity in \eqref{eq:comp_def}, it follows that
\[
	\lambda_{n(\varepsilon,d,c),d} \le \varepsilon \quad \forall \varepsilon^{-1} + d \ge V_c, \qquad \text{where }
	n(\varepsilon,d,c):=\left\lfloor h(cT(\varepsilon^{-1},d)) \right\rfloor +1.
\]
Since $h(cT(\cdot,d))$ is non-decreasing, it follows that
\begin{equation}
		n(\varepsilon,d,c)\le  h(cT(\varepsilon^{-1},d)) +1\le  h(cT(\lambda^{-1}_{n(\varepsilon,d,c),d},d)) +1\qquad  \quad \forall \varepsilon^{-1} + d \ge V_c. \label{eq:nedcbound}
\end{equation}

For a fixed $c$ and $d$, define
\begin{equation} \label{eq:nstardef}
	\varepsilon^{-1}_{\max}(d,c) := \max(V_c - d,\lambda_{1,d}^{-1}), \qquad n^*(d,c) : = n(\varepsilon^{-1}_{\max}(d,c),d,c) \le h(cT(\varepsilon^{-1}_{\max}(d,c),d)) +1.
\end{equation}
By varying $\varepsilon^{-1}$ in the interval $[\varepsilon^{-1}_{\max}(d,c),\infty)$,  the integer $n(\varepsilon,d,c)$ attains, by Assumption \eqref{eq:Tlim}, all values greater than or equal to $n^*(d,c)$.  Thus, by \eqref{eq:nedcbound},
\begin{align}
		n &\le h(cT(\lambda^{-1}_{n,d},d)) \qquad  \forall n \ge n^*(d,c), \label{eq:subhTneq}\\
		\nonumber
		\frac{1}{h(2 cT(\lambda^{-1}_{n,d},d)) } & \le \frac{1}{[h(cT(\lambda^{-1}_{n,d},d))]^{2}}  \le \frac 1{n^2}   \qquad  \forall n \ge n^*(d,c) \qquad \text{by \eqref{eq:h_cond}}. \\
		\intertext{Combining this inequality with the upper bound on $n^*(d,c)$ yields}
		\nonumber
		S_{2c} & =\sup_{d\in\naturals}  \, \sum_{i=1}^\infty \frac1{h(2cT(\lambda_{i,d}^{-1},d))} \\
		\nonumber
		&  = \sup_{d\in\naturals} \left [
		\sum_{i=1}^{n^*(d,c)-1} \frac1{h(2cT(\lambda_{i,d}^{-1},d))}
		+ \sum_{i=n^*(d,c)}^\infty \frac1{h(2cT(\lambda_{i,d}^{-1},d))}
		\right]\\
		 \nonumber
		 & \le \sup_{d\in\naturals} \left [
\frac{n^*(d,c)-1}{h(2cT(\lambda_{1,d}^{-1},d))}
+ \sum_{i=1}^\infty \frac1{i^2}
\right]
 \qquad \text{by \eqref{eq:subhTneq}}\\
 \label{eq:lastone}
		 & \le \sup_{d\in\naturals} \left [
		 \frac{ h(cT(\varepsilon^{-1}_{\max}(d,c),d))}{h(2cT(\lambda_{1,d}^{-1},d))}
		\right] + \frac{\pi^2}{6}
		\qquad \text{by \eqref{eq:nstardef}.}
\end{align}

Note that
\[
\varepsilon^{-1}_{\max}(d,c) = \max(V_c - d,\lambda_{1,d}^{-1})
= \begin{cases} V_c - d \le V_c, & d \in \{1, \ldots, d^*_c : = \lfloor V_c - \lambda_{1,d}^{-1} \rfloor\}, \\
\lambda_{1,d}^{-1}, & d \in \{d^*_c+1, \ldots\},
\end{cases}
\]
and furthermore, it follows from above that we always have $d^*_c\le \lfloor V_c \rfloor$.
Recalling that $T$ is non-decreasing in its arguments, this implies that
\begin{align*}
    \MoveEqLeft{\sup_{d\in\naturals} \,
		 \frac{ h(cT(\varepsilon^{-1}_{\max}(d,c),d))}{h(2cT(\lambda_{1,d}^{-1},d))}}\\
  &\le \max \left \{
  \sup_{d \in \{1, \ldots, d^*_c\}} \,
		 \frac{ h(cT(\varepsilon^{-1}_{\max}(d,c),d))}{h(2cT(\lambda_{1,d}^{-1},d))},
  \sup_{d\in \{d^*_c+1, \ldots \}} \,
		 \frac{ h(cT(\varepsilon^{-1}_{\max}(d,c),d))}{h(2cT(\lambda_{1,d}^{-1},d))}
  \right \} \\
  & \le \max \left \{
  \sup_{d \in \{1, \ldots, d^*_c\}}  \,
		 \frac{ h(cT(V_c,d))}{h(2cT(\lambda_{1,d}^{-1},d))}
  ,
  \sup_{d\in \{d^*_c+1, \ldots \}}  \,
		 \frac{ h(cT(\lambda_{1,d}^{-1},d))}{h(2cT(\lambda_{1,d}^{-1},d))}
  \right \}
  \\
  & \le \max \left \{
		 \frac{ h(cT(V_c,d^*_c))}{1},
  \sup_{d\in \{d^*_c+1, \ldots \}}  \,
		 \frac{ h(cT(\lambda_{1,d}^{-1},d))}{[h(cT(\lambda_{1,d}^{-1},d))]^2}
  \right \} \\
  & \le \max \left \{
		 \frac{ h(cT(V_c,\lfloor d^*_c \rfloor))}{1},
		 1
  \right \} \\
  & = h(cT(V_c,\lfloor V_c \rfloor))
		  \\
  & < \infty,
\end{align*}
where we used that $h(\cdot)$ is always at least 1. Combining this bound with the upper bound on $S_c$ in \eqref{eq:lastone} establishes the finiteness of $S_{2c}$ and completes the proof of the necessary condition.
This concludes the proof of Theorem \ref{thm:subhT}.

\end{proof}

\begin{example}[Weak Tractability]
	Suppose that
	\[
	T(\varepsilon^{-1},d,s,t) = \max(1,\varepsilon^{-1})^s + d^t \quad \forall \varepsilon,s,t > 0, \, d \in \naturals, \qquad h(x) = \exp(x) \quad \forall x \ge 0.
	\]
	Then Theorem \ref{thm:subhT} implies that
	\[\lim_{\varepsilon^{-1}+d\rightarrow \infty} \frac{\log( \comp(\varepsilon,d))}{\varepsilon^{-s}+d^t} = 0
	\iff
	 \sup_{d\in\naturals}  \, \sum_{i=1}^\infty \exp(-c\, (\min\{1,\lambda_{i,d}\}^{s} + d^t )) <\infty \qquad \forall c > 0.
	\]
	The expression on the left is the notion of $(s,t)$-weak tractability introduced in \cite{SW15}, and the condition on the right was derived in \cite[Theorem 3.1]{WerWoz17a}.  Weak tractability corresponds to $(s,t)=(1,1)$, and uniform weak tractability corresponds to the case when a problem is weakly tractable for all positive $s$ and $t$.
\end{example}

\section{(Strong) Tractability on a Restricted Domain} \label{sec:restricted}

Suppose that the domain of interest $(\varepsilon^{-1},d)$ is not all of $(0,\infty) \times \naturals$ but some subset
\begin{equation}
    \Omega  := \{(\varepsilon^{-1},d) : \varepsilon \in (\theM(d),\infty), \ d \in \naturals \}, \qquad \text{where }\theM : \naturals \to [0,\infty).
\end{equation}
We then expect that the equivalent conditions for (strong) tractability will be similar in form, but weaker than the conditions in Theorems  \ref{thm_main_strong_tract2} and \ref{thm_main_tract2}.

\begin{definition}
    A problem is strongly
$T$-tractable with parameter $\vp$ \emph{on the restricted domain $\Omega$} iff the information complexity is independent of the dimension of the problem, that is, there exists a positive constant $C_{\vp}$, again independent of $\varepsilon$ and $d$, such that
\begin{equation} \label{eq:rest_strong_tract_def}
	\comp(\varepsilon,d) \le C_{\vp}\, T(\varepsilon^{-1},1,\vp) \qquad \forall(\varepsilon^{-1},d) \in \Omega.
\end{equation}

A problem is
$T$-tractable with parameter $\vp$ \emph{on the restricted domain $\Omega$} iff there exists a positive constant $C_{\vp}$, again independent of $\varepsilon$ and $d$, such that
\begin{equation} \label{eq:rest_tract_def}
	\comp(\varepsilon,d) \le C_{\vp}\, T(\varepsilon^{-1},d,\vp) \qquad \forall(\varepsilon^{-1},d) \in \Omega.
\end{equation}
 Define the closures of the sets of parameters for which our (strong) tractability conditions hold:
\begin{equation*}
	\cp_{\textup{rtrct}} : = \{\vp^* : \eqref{eq:rest_tract_def} \text{ holds }\forall \vp \in (\vp^*,\boldsymbol{\infty})\}, \qquad
	\cp_{\textup{rstrct}} : = \{\vp^* : \eqref{eq:rest_strong_tract_def} \text{ holds }\forall \vp \in (\vp^*,\boldsymbol{\infty})\}.
\end{equation*}
    The  set of optimal parameters is defined as all of those parameters satisfying the (strong) tractability conditions that are not greater than or equal to others:
\begin{gather}
	\mathcal{P}_{\textup{ropt}} : = \{\vp^* \in \cp_{\textup{rtrct}} :  \vp^* \notin [\vp,\boldsymbol{\infty}) \ \forall \vp \in  \cp_{\textup{rtrct}} \setminus \{\vp^*\} \}, \\
	\mathcal{P}_{\textup{rsopt}} : = \{\vp^* \in \cp_{\textup{rstrct}} :  \vp^* \notin [\vp,\boldsymbol{\infty}) \ \forall \vp \in  \cp_{\textup{rstrct}} \setminus \{\vp^*\} \}.
\end{gather}
\end{definition}

Define $\theUB: \naturals \to \natzero \cup \{\infty\}$
\begin{equation} \label{eq:Mddef}
    \theUB(d) := \begin{cases}
        \min\{n \in \natzero : \lambda_{n+1,d} \le \theM(d)\}, & \theM(d) > 0, \\
        \infty, & \theM(d) = 0.
    \end{cases}
\end{equation}
Also, for each $d \in \naturals$ define the set
\begin{equation} \label{eq:hIdef}
\mathcal{I}_d :=\begin{cases}
\emptyset, & \theUB(d) = 0, \\
\{1,\ldots, \theUB(d)\}, & 0 < \theUB(d) < \infty, \\
\naturals, & \theUB(d) = \infty.
\end{cases}
\end{equation}

From the definition of $\theUB(d)$ it follows that $(\lambda_{i,d}^{-1},d) \in \Omega  \iff i \in \hI_d$, and
\begin{equation} \label{eq:comprestr}
  \comp(\varepsilon,d) \begin{cases}
= 0, & \theUB(d) = 0, \\
    \in \hI_d, & \theUB(d) > 0,
\end{cases}
\qquad \forall (\varepsilon^{-1},d) \in \Omega.
\end{equation}

The equivalent conditions for (strong) $T$-tractability on a restricted domain in Theorems \ref{thm_main_rest_strong_tract} and \ref{thm_main_rest_tract} mimic the conditions given in Theorems \ref{thm_main_strong_tract2} and \ref{thm_main_tract2}, except that the upper limits on the sums now correspond to $\theUB(d)$ rather than $\infty$.  The proofs are a bit more delicate, but similar arguments are used.

We note that it may be possible to define $\Omega$ such that  $\lambda_{1,d} \le \theM(d)$
for all $d \in \naturals$. In this case, $\theUB(d) = 0$ for all $d \in \naturals$, the equivalent conditions for (strong) tractability are trivially satisfied, and the zero algorithm satisfies the error tolerance.

\subsection{Strong Tractability on a Restricted Domain}

\begin{theorem}\label{thm_main_rest_strong_tract}
Let $T$ be a tractability function as specified in \eqref{eq:Tspec} and satisfying \eqref{eq:Tconditions} and \eqref{eq:ptauassume}.  A problem is strongly $T$-tractable \emph{on the restricted domain $\Omega$} iff there exists $\vp \in [\vzero, \boldsymbol{\infty})$ and an integer $L_{\vp} > 0$ such that
\begin{equation} \label{eq:strong_rest_tractiff}
     S_{\vp}:=\sup_{d \in \naturals} \sum_{i = L_{\vp}}^{\theUB(d)} \frac{1}{T(\lambda_{i,d}^{-1},1,\vp)} < \infty.
\end{equation}
By convention, if $\theUB(d) < L_{\vp}$, the sum is zero.

If \eqref{eq:strong_rest_tractiff} holds for some $\vp$, let  $\widetilde{\cp}_{\textup{rstrct}} : = \{\vp^* : \eqref{eq:strong_rest_tractiff} \text{ holds }\forall \vp \in (\vp^*,\boldsymbol{\infty})\}$.  Then $\cp_{\textup{rstrct}} = \widetilde{\cp}_{\textup{rstrct}}$, and the set of optimal strong tractability parameters on the restricted domain is
\[
	\cp_{\textup{rsopt}} =
	\{\vp^* \in \widetilde{\cp}_{\textup{rstrct}} :  \vp^* \notin [\widetilde{\vp},\boldsymbol{\infty}) \ \forall \widetilde{\vp}\in  \widetilde{\cp}_{\textup{rstrct}} \setminus \{\vp^*\} \}.
\]
\end{theorem}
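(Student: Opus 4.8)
The proof will closely follow the structure of the proof of \Cref{thm_main_strong_tract2}, with the single modification that every infinite sum $\sum_{i=\cdot}^{\infty}$ is replaced by a finite (or infinite) sum $\sum_{i=\cdot}^{\theUB(d)}$, and every statement ``$\forall \varepsilon > 0$'' is replaced by ``$\forall (\varepsilon^{-1},d) \in \Omega$'', i.e.\ $\forall \varepsilon > \theM(d)$. There are three parts: sufficiency, necessity, and optimality.

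\textbf{Sufficiency.} Assume \eqref{eq:strong_rest_tractiff}. Fix $(\varepsilon^{-1},d) \in \Omega$, so $\varepsilon > \theM(d)$ and hence $\comp(\varepsilon,d) \in \hI_d$ by \eqref{eq:comprestr}; in particular $\comp(\varepsilon,d) \le \theUB(d)$. The averaging argument of \eqref{eq:implicate_a}, applied with $\thed = 1$ and with the sum truncated at $\theUB(d)$ instead of $\infty$, still goes through: for $N \le n \le \theUB(d)-1$ we have
\[
\frac{1}{T(\lambda_{n+1,d}^{-1},1,\vp)} \le \frac{1}{n-N+1}\sum_{i=N}^{\theUB(d)} \frac{1}{T(\lambda_{i,d}^{-1},1,\vp)},
\]
so that $n-N+1 \ge T(\varepsilon^{-1},1,\vp) \sum_{i=N}^{\theUB(d)} 1/T(\lambda_{i,d}^{-1},1,\vp)$ forces $\lambda_{n+1,d} \le \varepsilon$. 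Since also $\comp(\varepsilon,d) \le \theUB(d)$ automatically, one obtains, taking $N = L_{\vp}$, the same bound as before,
\[
\comp(\varepsilon,d) \le T(\varepsilon^{-1},1,\vp)\Bigl[\frac{L_{\vp}}{T(\varepsilon^{-1},1,\vp)} + \sum_{i=L_{\vp}}^{\theUB(d)} \frac{1}{T(\lambda_{i,d}^{-1},1,\vp)}\Bigr] \le \Bigl(\frac{L_{\vp}}{T(0,1,\vp)} + S_{\vp}\Bigr) T(\varepsilon^{-1},1,\vp),
\]
which is \eqref{eq:rest_strong_tract_def}. One small care point: when $\theUB(d) < L_{\vp}$ the sum is zero by convention; then $\hI_d$ is finite with at most $L_{\vp}-1$ elements, so $\comp(\varepsilon,d) \le \theUB(d) < L_{\vp} \le (L_{\vp}/T(0,1,\vp))T(\varepsilon^{-1},1,\vp)$ directly, and the bound still holds.

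\textbf{Necessity.} Assume \eqref{eq:rest_strong_tract_def} holds for some $\vp \ge \vzero$ with constant $C_{\vp}$. For $(\varepsilon^{-1},d) \in \Omega$ the non-increasing ordering of the singular values gives $\lambda_{\lfloor C_{\vp} T(\varepsilon^{-1},1,\vp)\rfloor + 1,d} \le \varepsilon$, exactly as in \eqref{eq:lambda_K_strong3}, but now only for $\varepsilon > \theM(d)$. Set $i(\varepsilon,\vp) := \lfloor C_{\vp} T(\varepsilon^{-1},1,\vp)\rfloor + 1$ as in \eqref{eq:iepsp}. As $\varepsilon$ ranges over $(\theM(d),\infty)$, the index $i(\varepsilon,\vp)$ takes all integer values between $i(\infty,\vp) := \lfloor C_{\vp} T(0,1,\vp)\rfloor + 1$ and some maximal value; I claim this maximal value is at least $\theUB(d)$. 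Indeed, as $\varepsilon \downarrow \theM(d)$ with $\varepsilon > \theM(d)$, the condition $\lambda_{i(\varepsilon,\vp),d} \le \varepsilon$ together with $\lambda_{\theUB(d),d} > \theM(d)$ (which holds by the minimality in \eqref{eq:Mddef} when $0 < \theUB(d) < \infty$, and vacuously when $\theUB(d) = \infty$) shows $i(\varepsilon,\vp)$ can be pushed up to cover all of $\{i(\infty,\vp),\ldots,\theUB(d)\}$ — using \eqref{eq:Tlim} so that $T(\varepsilon^{-1},1,\vp) \to \infty$ as $\varepsilon \to \theM(d)$ if $\theM(d) = 0$, and using that $T(\varepsilon^{-1},1,\vp)$ is bounded below on $(\theM(d),\infty)$ and the values of $i(\cdot,\vp)$ form a set of consecutive integers covering the relevant range if $\theM(d) > 0$. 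Hence $i \le C_{\vp} T(\lambda_{i,d}^{-1},1,\vp) + 1$ for all $i$ with $i(\infty,\vp) \le i \le \theUB(d)$ and all $d$. Now apply \eqref{eq:ptauassume} with any $\tau > 1$ exactly as in the original proof to get $1/T(\lambda_{i,d}^{-1},1,\tau\vp) \le K_{\vp,\tau} C_{\vp}^\tau/(i-1)^\tau$ on this range, and sum from $i(\infty,\vp)+1$ to $\theUB(d)$:
\[
\sup_{d\in\naturals} \sum_{i=i(\infty,\vp)+1}^{\theUB(d)} \frac{1}{T(\lambda_{i,d}^{-1},1,\tau\vp)} \le K_{\vp,\tau} C_{\vp}^\tau \sum_{i=2}^{\infty} \frac{1}{(i-1)^\tau} = K_{\vp,\tau} C_{\vp}^\tau \zeta(\tau) < \infty.
\]
This is \eqref{eq:strong_rest_tractiff} with $\vp$ replaced by $\vp' = \tau\vp$ and $L_{\vp'} = i(\infty,\vp)+1$.

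\textbf{Optimality.} This is verbatim the argument from \Cref{thm_main_strong_tract2}: to show $\cp_{\textup{rstrct}} \subseteq \widetilde{\cp}_{\textup{rstrct}}$, given $\vp^* \in \cp_{\textup{rstrct}}$ and $\vp' \in (\vp^*,\vinfty)$, pick $\vp, \tau$ with $\vp^* < \vp < \vp' = \tau\vp$; then \eqref{eq:rest_strong_tract_def} holds for $\vp$, so by necessity \eqref{eq:strong_rest_tractiff} holds for $\vp'$. For the reverse inclusion, \eqref{eq:strong_rest_tractiff} for all $\vp \in (\vp^*,\vinfty)$ implies \eqref{eq:rest_strong_tract_def} for all such $\vp$ by sufficiency. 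Then $\cp_{\textup{rstrct}} = \widetilde{\cp}_{\textup{rstrct}}$, and the formula for $\cp_{\textup{rsopt}}$ follows from its definition.

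\textbf{Main obstacle.} The delicate point is the ``coverage'' claim in the necessity part: that as $\varepsilon$ ranges over $(\theM(d),\infty)$, the integers $i(\varepsilon,\vp)$ actually reach every value up to $\theUB(d)$, so that the bound $i \le C_{\vp}T(\lambda_{i,d}^{-1},1,\vp)+1$ is available for the full range of summation indices $L_{\vp'} \le i \le \theUB(d)$. When $\theM(d) = 0$ this follows from \eqref{eq:Tlim} as in the unrestricted case. When $\theM(d) > 0$, the subtlety is that $\varepsilon$ cannot reach $\theM(d)$, so one must verify that for every $i \le \theUB(d)$ there is some admissible $\varepsilon > \theM(d)$ with $\lambda_{i,d} \le \varepsilon$ and $i(\varepsilon,\vp) = i$; the key fact enabling this is $\lambda_{\theUB(d),d} > \theM(d)$, which holds by the definition \eqref{eq:Mddef} of $\theUB(d)$ as a minimum. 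Pinning this down carefully — perhaps by directly choosing $\varepsilon$ slightly below $\lambda_{i-1,d}$ (or handling ties among the $\lambda_{\cdot,d}$) — is the one place where the restricted-domain proof genuinely differs from the original and needs attention.
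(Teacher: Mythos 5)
Your proof is correct and follows essentially the same route as the paper's: for sufficiency you truncate the averaging argument at $\theUB(d)$, and for necessity you use $\lambda_{\theUB(d),d} > \theM(d)$ (coming from the minimality in \eqref{eq:Mddef}) to argue that the indices $i(\varepsilon,\vp)$ cover all of $\{i(\infty,\vp),\ldots,\theUB(d)\}$, which is exactly the paper's key step. The only cosmetic difference is in the sufficiency part, where the paper takes $N=\min\{L_{\vp},\theUB(d)\}$ and absorbs the possible extra term into $C_{\vp}$, whereas you dispose of the case $\theUB(d)<L_{\vp}$ separately; both variants are fine.
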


\begin{proof}
\textbf{Sufficient condition:}\newline
Fix $d$.  If $\theUB(d) = 0$, then $\comp(\varepsilon,d) = 0 \le C_{\vp}\, T(\varepsilon^{-1},1,\vp)$ automatically for all $\varepsilon \in [\theM(d), \infty)$.

For the case $\theUB(d)> 0$, following the argument for the sufficient condition in Theorem \ref{thm_main_strong_tract2} it follows that for all $\thed \in \naturals$ and integers $n, N \in \hI_d$  with $N \le n$, we have
\begin{align*}
    \lambda_{n+1,d} \le \lambda_{n,d} \le \cdots \le \lambda_{N,d}
    & \implies \frac{1}{T(\lambda_{n+1, d}^{-1},\thed,\vp)} \le \frac{1}{T(\lambda_{n, d}^{-1},\thed,\vp)} \le \cdots \le \frac{1}{T(\lambda_{N, d}^{-1},\thed,\vp)} \\
     \implies \frac{1}{T(\lambda_{n+1, d}^{-1},\thed,\vp) }
    \le &\frac{1}{n-N+1} \sum_{i=N}^n  \frac{1}{T(\lambda_{i, d}^{-1},\thed,\vp) }
    \le \frac{1}{n-N+1} \sum_{i=N}^{\theUB(d)}  \frac{1}{T(\lambda_{i, d}^{-1},\thed,\vp)}.
\end{align*}
Thus, we can conclude from the previous line that
\begin{multline} \label{eq:implicate_a_rest}
    n - N +1 \ge T(\varepsilon^{-1},\thed,\vp) \sum_{i=N}^{\theUB(d)} \frac{1}{T(\lambda_{i, d}^{-1},\thed,\vp)} \\
   \implies   \frac{1}{T(\lambda_{n+1, d}^{-1},\thed,\vp)} \le
   \frac{1}{n-N+1} \sum_{i=N}^{\theUB(d)} \frac{1}{T(\lambda_{i, d}^{-1},\thed,\vp) } \le \frac{1}{T(\varepsilon^{-1},\thed,\vp)} \\ \forall \thed \in \naturals,\  N \in \hI_d, \ \mbox{and}\ n\ge N
\end{multline}

Moreover, given that $T$ is increasing in its arguments, we have the following equivalent expression for the information complexity via
\eqref{eq:comp_def}:
\begin{align} 
	\nonumber
	\comp(\varepsilon, d) & = \min \{n \in \hI_d : \lambda_{n+1, d} \le \varepsilon\} \qquad \text{by \eqref{eq:comprestr}} \\
	\nonumber
	& = \min \biggl\{n \in \hI_d : \frac{1}{T(\lambda_{n+1,d}^{-1},\thed,\vp)}\le \frac{1}{T(\varepsilon^{-1},\thed,\vp) }\biggr\} \qquad  \forall \thed\in \naturals\\
	\nonumber
	& \le  \min \biggl\{n \in \hI_d : n \ge N - 1 + T(\varepsilon^{-1},\thed,\vp) \sum_{i=N}^{\theUB(d)} \frac{1}{T(\lambda_{i, d}^{-1},\thed,\vp)} \biggr\} \;  \forall  \thed, \ 1 \le N \le \theUB(d) \\
 \nonumber
 & \hspace{20ex}\text{by \eqref{eq:implicate_a_rest}.}
 \intertext{
 This last statement holds for all $N \le \theUB(d)$ because $n \ge N$ automatically in the expression above.  Thus,}
 \comp(\varepsilon,d)
	& \le  T(\varepsilon^{-1},\thed,\vp) \Biggl [\frac{N}{T(\varepsilon^{-1},\thed,\vp)}  +  \sum_{i=N}^{\theUB(d)} \frac{1}{T(\lambda_{i, d}^{-1},\thed,\vp)} \Biggr] \quad \forall  \thed,\ 1 \le N \le \theUB(d). \label{eq:compUP_rest}
\end{align}

Now we take this upper bound further, specializing to the case of $\thed=1$ and $N = \min\{L_{\vp},\theUB(d)\}$:
\begin{align*}
       \comp(\varepsilon,d)
       & \le T(\varepsilon^{-1},1, \vp) \Biggl [ \frac{\min\{L_{\vp},\theUB(d)\}}{T(\varepsilon^{-1},1, \vp)}  + \underbrace{\sum_{i = \min\{L_{\vp},\theUB(d)\}}^{\theUB(d)} \frac{1}{T(\lambda_{i, d}^{-1},1,\vp)}}_{\le S_{\vp} + 1/ T(\lambda_{\theUB(d), d}^{-1},1,\vp) \text{ by \eqref{eq:strong_rest_tractiff}} }
        \Biggr] \\
       & \le T(\varepsilon^{-1},1,\vp) \underbrace{\left[\frac{L_{\vp}+1}{T(0,1, \vp)} + S_{\vp} \right]}_{=:C_{\vp}}
       \qquad \text{by \eqref{eq:Tnondecreasing}}\\
       & =  C_{\vp}T(\varepsilon^{-1},1,\vp).
\end{align*}
This means that we have strong $T$-tractability on the restricted domain $\Omega$ via \eqref{eq:rest_strong_tract_def}, and verifies the sufficiency of \eqref{eq:strong_rest_tractiff}.

\bigskip
\noindent \textbf{Necessary condition:} \\
Suppose that we have strong
$T$-tractability on the restricted domain $\Omega$ as defined in \eqref{eq:rest_strong_tract_def}. That is, for some $\vp \ge \vzero$ there exists a positive constant $C_{\vp}$ such that
\[
\comp(\varepsilon,d)\le C_{\vp}\, T(\varepsilon^{-1},1,\vp)
\qquad \forall (\varepsilon^{-1}, d) \in \Omega.
\]
Since the sequence of singular values $\lambda_{1,d}, \lambda_{2,d}, \ldots $ is non-increasing, we have
\begin{equation}\label{eq:lambda_K_strong4}
\lambda_{\lfloor C_{\vp}\, T(\varepsilon^{-1},1,\vp)\rfloor +1,d}\le \varepsilon \qquad\forall (\varepsilon^{-1}, d) \in \Omega.
\end{equation}

For  all $\varepsilon > 0$, define the positive integers
\[
i(\varepsilon,\vp):= \lfloor C_{\vp}\, T(\varepsilon^{-1},1,\vp)\rfloor +1, \quad
i(\infty,\vp) =  \lfloor C_{\vp}\, T(0,1,\vp)\rfloor +1 \ge 1,
\]
just as in \eqref{eq:iepsp}.
Thus, it follows by \eqref{eq:lambda_K_strong4} that
\begin{equation} \label{eq:lambdaiep}
\lambda_{i(\varepsilon,\vp),d} \le \varepsilon \qquad
 \forall (\varepsilon^{-1}, d) \in \Omega.
\end{equation}
Note furthermore that we always have
\[
i(\varepsilon,\vp)\le C_{\vp}\, T(\varepsilon^{-1},1,\vp)+1 \le C_{\vp} T(\lambda_{i(\varepsilon,\vp),d}^{-1},1,\vp)+1 \qquad  \forall (\varepsilon^{-1}, d)\in \Omega,
\]
since
$T(\cdot,1, \vp)$ is non-decreasing.

For $\varepsilon$ taking on all values in $(\theM(d),\infty)$, $i(\varepsilon,\vp)$ takes on (at least) all values in
\[
\mathscr{I}_{d,\vp}:=\begin{cases}
\emptyset, & i(\theM(d),\vp) = i(\infty,\vp), \\
\{i(\infty,\vp),\ldots, i(\theM(d),\vp)-1\}, & \theM(d) > 0, \\
\{i(\infty,\vp), i(\infty,\vp) + 1, \ldots\}, & \theM(d) = 0.
\end{cases}
\]
So,
\[
i\le  C_{\vp} T(\lambda_{i,d}^{-1},1,\vp)+1 \qquad \forall i\in \mathscr{I}_{d,\vp},  \ d \in \naturals.
\]
This implies via our technical assumption \eqref{eq:ptauassume} that, for $\tau>1$,
\begin{gather}
\nonumber
 K_{\vp,\tau}\,T (\lambda_{i,d}^{-1},1,\tau \vp) \ge
 [T(\lambda_{i,d}^{-1},1, \vp)]^\tau
 \ge
  \left[\frac{(i-1)}{C_{\vp}}\right]^\tau \qquad \forall i\in \mathscr{I}_{d,\vp},  \ d \in \naturals, \\
  \label{eq:rest_strong_ineq}
 \frac{1}{T (\lambda_{i,d}^{-1},1,\tau \vp)} \le
\frac{K_{\vp,\tau}\, C_{\vp}^\tau}{(i-1)^\tau}\qquad \forall i \in \mathscr{I}_{d,\vp} \setminus\{1\}, \ d \in \naturals .
\end{gather}

To complete the proof, we will sum both sides of \eqref{eq:rest_strong_ineq} over the range of $i$ appearing in \eqref{eq:strong_rest_tractiff}, to establish that $S_{\tau\vp}$ is finite.  To do this we need to show that \eqref{eq:rest_strong_ineq} holds for that range.  Choose $L_{\tau\vp} = i(\infty,\vp) +1 \ge 2$. There are three cases.
\begin{enumerate}
\renewcommand{\labelenumi}{\roman{enumi})}

\item If $\theUB(d) < L_{\tau\vp}$, then
\[
\sum_{i = L_{\tau\vp}}^{\theUB(d)} \frac{1}{T (\lambda_{i,d}^{-1},1, \tau \vp)} = 0,
\]
which does not affect the finiteness of $S_{\tau\vp}$.

\item If $\theUB(d) \ge L_{\tau\vp}$ and $\theM(d) = 0$, then $\{L_{\tau\vp},  L_{\tau\vp} + 1, \ldots \} \subseteq \mathscr{I}_{d,\vp} \setminus \{1\}$.

\item If $\theUB(d) \ge L_{\tau\vp}$ and $\theM(d) >0$, then \eqref{eq:Mddef} implies that $\lambda_{\theUB(d)+1,d} \le \theM(d) < \lambda_{\theUB(d),d}$.  Moreover, there must be some $\varepsilon^*$ satisfying $\theM(d) < \varepsilon^* < \lambda_{\theUB(d),d}$, for which $\lambda_{i(\varepsilon^*,\vp)} \le \varepsilon^* < \lambda_{\theUB(d),d}$ by \eqref{eq:lambdaiep}.  Because of the ordering of the singular values, this implies that  $i(\varepsilon^*,\vp) \ge \theUB(d)+1$.  Since $i(\cdot,\vp)$ is nonincreasing, $i(\theM(d),\vp) \ge \theUB(d)+1$.
This means that $\theUB(d) \le i(\theM(d),\vp)-1$.  So, $\{L_{\tau\vp}, \ldots, \theUB(d)\} \subseteq \mathscr{I}_{d,\vp} \setminus \{1\}$.

\bigskip

In both the second and third cases,
\begin{align*}
S_{\tau\vp} & = \sup_{d\in\naturals} \sum_{i = L_{\tau\vp}}^{\theUB(d)} \frac{1}{T (\lambda_{i,d}^{-1},1, \tau \vp)}
\le
\sup_{d\in\naturals} \sum_{i \in \mathscr{I}_{d,\vp} \setminus\{1\} } \frac{1}{T (\lambda_{i,d}^{-1},1, \tau \vp)} \\
&  \le  K_{\vp,\tau}\, C_{\vp}^\tau
\sum_{i=2}^\infty \frac{1}{(i-1)^\tau}
 \le K_{\vp,\tau}\, C_{\vp}^\tau
\zeta (\tau)
 < \infty \qquad \text{by \eqref{eq:rest_strong_ineq},}
\end{align*}
where $\zeta$ denotes the Riemann zeta function.

\end{enumerate}
Thus, in all three cases, the assumption of strong tractability on the restricted domain implies \eqref{eq:strong_rest_tractiff} with $\vp$ replaced by $\vp' = \tau \vp$, and so we see the necessity of $\eqref{eq:strong_rest_tractiff}$.

\bigskip
\noindent \textbf{Optimality:} \\
The proof of optimality is analogous to that for Theorem \ref{thm_main_strong_tract2} and is omitted.

\bigskip

\noindent This concludes the proof of  Theorem \ref{thm_main_rest_strong_tract}.  
\end{proof}

\subsection{Tractability on a Restricted Domain}

The equivalent condition for tractability on a restricted domain is analogous to the conditions for tractability on an unrestricted domain and for strong tractability on a restricted domain.  The proof is analogous as well.

\begin{theorem}\label{thm_main_rest_tract}
Let $T$ be a tractability function as specified in \eqref{eq:Tspec} and satisfying \eqref{eq:Tconditions} and \eqref{eq:ptauassume}.  A problem is $T$-tractable on the restricted domain $\Omega$ iff there exists $\vp\ge \vzero$ and a positive constant $L_{\vp}$ such that
\begin{equation} \label{eq:rest_tractiff}
     S_{\vp}:=\sup_{d \in \naturals}
     \sum_{i = \lceil L_{\vp}\, T(0,d,\vp) \rceil}^{\theUB(d)} \frac{1}{T(\lambda^{-1}_{i,d},d,\vp)}< \infty.
\end{equation}
If \eqref{eq:rest_tractiff} holds for some $\vp$, let $\widetilde{\cp}_{\textup{rtrct}} : = \{\vp^* : \eqref{eq:rest_tractiff} \text{ holds }\forall \vp \in (\vp^*,\boldsymbol{\infty}) \}$.
Then $\cp_{\textup{rtrct}} = \widetilde{\cp}_{\textup{rtrct}}$, and the set of optimal  tractability parameters is
\[
\cp_{\textup{ropt}} =
\{\vp^* \in \widetilde{\cp}_{\textup{rtrct}} :  \vp^* \notin [\widetilde{\vp},\boldsymbol{\infty}) \ \forall \widetilde{\vp}\in  \widetilde{\cp}_{\textup{rtrct}} \setminus \{\vp^*\} \}.
\]
\end{theorem}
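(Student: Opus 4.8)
The plan is to prove Theorem~\ref{thm_main_rest_tract} by merging the $d$-dependence machinery from the proof of Theorem~\ref{thm_main_tract2} with the restricted-domain bookkeeping from the proof of Theorem~\ref{thm_main_rest_strong_tract}; as in those proofs, I would organize the argument into sufficiency, necessity, and optimality, and the only genuinely delicate point will be lining up the $d$-dependent lower summation limit with the upper limit $\theUB(d)$ in the necessity direction.

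\emph{Sufficiency.} Suppose \eqref{eq:rest_tractiff} holds for some $\vp\ge\vzero$. Fix $d$. If $\theUB(d)=0$, then $\comp(\varepsilon,d)=0$ on $\Omega$ by \eqref{eq:comprestr} and \eqref{eq:rest_tract_def} is immediate. If $\theUB(d)>0$, I would apply the restricted-domain complexity bound \eqref{eq:compUP_rest} with $\thed=d$ and $N=\min\{\lceil L_{\vp}\,T(0,d,\vp)\rceil,\theUB(d)\}$. Using $N\le\lceil L_{\vp}\,T(0,d,\vp)\rceil\le L_{\vp}\,T(0,d,\vp)+1$ together with $T(\varepsilon^{-1},d,\vp)\ge T(0,d,\vp)\ge T(0,1,\vp)$ (by \eqref{eq:Tnondecreasing}), the first bracketed term in \eqref{eq:compUP_rest} is at most $L_{\vp}+1/T(0,1,\vp)$; and the sum $\sum_{i=N}^{\theUB(d)}1/T(\lambda_{i,d}^{-1},d,\vp)$ is at most $S_{\vp}$ when $N=\lceil L_{\vp}\,T(0,d,\vp)\rceil$ and at most $1/T(0,1,\vp)$ when $N=\theUB(d)<\lceil L_{\vp}\,T(0,d,\vp)\rceil$ (a single term). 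Hence $\comp(\varepsilon,d)\le C_{\vp}\,T(\varepsilon^{-1},d,\vp)$ on $\Omega$ with $C_{\vp}:=L_{\vp}+2/T(0,1,\vp)+S_{\vp}$, i.e.\ $T$-tractability on $\Omega$.

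\emph{Necessity.} Suppose \eqref{eq:rest_tract_def} holds with constant $C_{\vp}$. Since $T$ is non-decreasing in $\vp$, \eqref{eq:rest_tract_def} holds for every $\vp'\ge\vp$ with the \emph{same} constant, so I fix $\tau>1$, set $\vp'=\tau\vp$, and work with $C_{\vp'}:=C_{\vp}$. By monotonicity of the singular values, $\lambda_{\lfloor C_{\vp}T(\varepsilon^{-1},d,\vp)\rfloor+1,d}\le\varepsilon$ for all $(\varepsilon^{-1},d)\in\Omega$. With $i(\varepsilon,d,\vp):=\lfloor C_{\vp}T(\varepsilon^{-1},d,\vp)\rfloor+1$ and $i(\infty,d,\vp):=\lfloor C_{\vp}T(0,d,\vp)\rfloor+1$, the argument of Theorem~\ref{thm_main_rest_strong_tract} (with the third argument of $T$ now $d$) shows that as $\varepsilon$ ranges over $(\theM(d),\infty)$ the integer $i(\varepsilon,d,\vp)$ attains at least all values in the index set $\mathscr{I}_{d,\vp}$ defined exactly as there, and that $i\le C_{\vp}T(\lambda_{i,d}^{-1},d,\vp)+1$ for every $i\in\mathscr{I}_{d,\vp}$. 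Applying \eqref{eq:ptauassume} then gives $1/T(\lambda_{i,d}^{-1},d,\vp')\le K_{\vp,\tau}\,C_{\vp}^\tau/(i-1)^\tau$ for all $i\in\mathscr{I}_{d,\vp}\setminus\{1\}$ and all $d\in\naturals$. Choosing $L_{\vp'}:=C_{\vp}+2/T(0,1,\vp')$ — the exact computation used in the proof of Theorem~\ref{thm_main_tract2} — yields $\lceil L_{\vp'}T(0,d,\vp')\rceil\ge i(\infty,d,\vp')+1\ge i(\infty,d,\vp)+1\ge 2$ (the middle inequality since $C_{\vp'}=C_{\vp}$ and $T(0,d,\vp')\ge T(0,d,\vp)$). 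It remains to check that $\{\lceil L_{\vp'}T(0,d,\vp')\rceil,\ldots,\theUB(d)\}\subseteq\mathscr{I}_{d,\vp}\setminus\{1\}$, which I would do by the three cases of Theorem~\ref{thm_main_rest_strong_tract}: the range is empty when $\theUB(d)<\lceil L_{\vp'}T(0,d,\vp')\rceil$; it lies in the infinite tail of $\mathscr{I}_{d,\vp}$ when $\theM(d)=0$; and when $\theM(d)>0$ one uses \eqref{eq:Mddef} and picks $\varepsilon^*$ with $\theM(d)<\varepsilon^*<\lambda_{\theUB(d),d}$ to conclude $\theUB(d)\le i(\theM(d),d,\vp)-1$, so the range again sits inside $\mathscr{I}_{d,\vp}\setminus\{1\}$. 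Summing the pointwise bound over this range then gives $S_{\vp'}\le K_{\vp,\tau}\,C_{\vp}^\tau\,\zeta(\tau)<\infty$ uniformly in $d$, which is \eqref{eq:rest_tractiff} for $\vp'$.

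\emph{Optimality and main obstacle.} The equality $\cp_{\textup{rtrct}}=\widetilde{\cp}_{\textup{rtrct}}$ follows verbatim from the corresponding argument in Theorem~\ref{thm_main_strong_tract2}: given $\vp^*\in\cp_{\textup{rtrct}}$ and $\vp'\in(\vp^*,\vinfty)$, all coordinates of $\vp'$ are positive, so one can pick $\vp$ and $\tau>1$ with $\vp^*<\vp<\vp'=\tau\vp$; then \eqref{eq:rest_tract_def} holds at $\vp$ (since $\vp>\vp^*$), and necessity gives \eqref{eq:rest_tractiff} at $\vp'$, so $\vp^*\in\widetilde{\cp}_{\textup{rtrct}}$; the reverse inclusion is sufficiency, and the formula for $\cp_{\textup{ropt}}$ is then immediate from its definition. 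I expect the main obstacle to be precisely the containment $\{\lceil L_{\vp'}T(0,d,\vp')\rceil,\ldots,\theUB(d)\}\subseteq\mathscr{I}_{d,\vp}\setminus\{1\}$ in the necessity step, i.e.\ reconciling the $d$-dependent lower limit with the restricted upper limit $\theUB(d)$ uniformly in $d$; everything else is a routine combination of the two earlier proofs.
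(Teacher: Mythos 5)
Your proposal is correct and follows essentially the same route as the paper's own proof: the same application of \eqref{eq:compUP_rest} with $N=\min\{\lceil L_{\vp}T(0,d,\vp)\rceil,\theUB(d)\}$ for sufficiency, the same index set $\mathscr{I}_{d,\vp}$, choice $L_{\vp'}=C_{\vp'}+2/T(0,1,\vp')$, and three-case analysis (including the $\varepsilon^*$ trick) for necessity, and the same reduction of optimality to the earlier theorems. The only cosmetic difference is that you make explicit the observation that the tractability constant can be taken unchanged when passing from $\vp$ to $\vp'=\tau\vp$, which the paper leaves implicit.
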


\begin{proof}
    \textbf{Sufficient condition:}\\
Fix $d$.  If $\theUB(d) = 0$, then $\comp(\varepsilon,d) = 0 \le C_{\vp}\, T(\varepsilon^{-1},d,\vp)$ automatically for all $\varepsilon \in [\theM(d), \infty)$.

For the case $\theUB(d)> 0$, we utilize the upper bound \eqref{eq:compUP_rest}, specializing to the case of $\thed=d$ and $N = \min\left\{\lceil L_{\vp}\, T(0,d,\vp)\rceil,\theUB(d)\right\}$:
\begin{align*}
       \comp(\varepsilon,d)
       & \le T(\varepsilon^{-1},d, \vp) \Biggl [ \frac{\min\left\{\lceil L_{\vp}\, T(0,d,\vp)\rceil,\theUB(d)\right\}}{T(\varepsilon^{-1},d, \vp)}  + \underbrace{\sum_{i = \min\left\{\lceil L_{\vp}\, T(0,d,\vp)\rceil,\theUB(d)\right\}}^{\theUB(d)} \frac{1}{T(\lambda_{i, d}^{-1},d,\vp)}}_{\le S_{\vp} + 1/ T(\lambda_{\theUB(d), d}^{-1},d,\vp) \text{ by \eqref{eq:rest_tractiff}}} 
        \Biggr] \\
       & \le T(\varepsilon^{-1},d,\vp)\left[\frac{ L_{\vp}\, T(0,d,\vp)+1}{T(0,d, \vp)} + S_{\vp} + \frac{1}{T(\lambda_{\theUB(d), d}^{-1},d,\vp)} \right]
       \qquad \text{by \eqref{eq:Tnondecreasing}}\\
       & \le T(\varepsilon^{-1},d,\vp) \underbrace{\left[L_{\vp} + \frac{2}{T(0,1, \vp)} + S_{\vp} \right]}_{=:C_{\vp}}
       \qquad \text{by \eqref{eq:Tnondecreasing}}\\
       & =  C_{\vp}T(\varepsilon^{-1},d,\vp).
\end{align*}
This means that we have strong $T$-tractability on the restricted domain $\Omega$ via \eqref{eq:rest_tract_def}, and verifies the sufficiency of \eqref{eq:rest_tractiff}.

\bigskip

\noindent \textbf{Necessary condition:}\\
Suppose that we have
$T$-tractability on the restricted domain. That is, for some $\vp \ge \vzero$, there exists a positive constant $C_{\vp}$ such that
\[
\comp(\varepsilon,d)\le C_{\vp}\, T(\varepsilon^{-1},d,\vp) \qquad \forall (\varepsilon^{-1}, d) \in \Omega.
\]
Since the sequence of singular values $\lambda_{1,d}, \lambda_{2,d}, \ldots $ is non-increasing, we have
\begin{equation}\label{eq:rest_lambda_K4}
	\lambda_{\lfloor C_{\vp}\, T(\varepsilon^{-1},d,\vp)\rfloor +1,d}\le \varepsilon \qquad
 \forall (\varepsilon^{-1}, d) \in \Omega.
\end{equation}

For $(\varepsilon^{-1}, d) \in \Omega$, define the positive integers
\[
i (\varepsilon,d,\vp):= \lfloor C_{\vp}\, T(\varepsilon^{-1},d,\vp)\rfloor +1, \qquad
i (\infty,d,\vp) := \lfloor C_{\vp}\, T(0,d,\vp)\rfloor +1 \ge 1.
\]
It follows by \eqref{eq:rest_lambda_K4} that $\lambda_{i(\varepsilon,d,\vp),d} \le \varepsilon$
for any  $(\varepsilon^{-1}, d) \in \Omega$.
Note furthermore that since $T(\cdot,d,\vp)$ is non-decreasing, we have
\[
i(\varepsilon,d,\vp)\le C_{\vp}\, T(\varepsilon^{-1},d,\vp)+1 \le C_{\vp} T(\lambda_{i(\varepsilon,d,\vp),d}^{-1},d,\vp)+1 \qquad \forall (\varepsilon^{-1}, d) \in \Omega.
\]
For $\varepsilon$ taking on all values in $(\theM(d),\infty)$, $i(\varepsilon,d,\vp)$ takes on (at least) all values in
\[
\mathscr{I}_{d,\vp}:=\begin{cases}
\emptyset, & i(\theM(d),d,\vp) = i(\infty,d,\vp), \\
\{i(\infty,d,\vp),\ldots, i(\theM(d),d,\vp)-1\}, & \theM(d) > 0, \\
\{i(\infty,d,\vp), i(\infty,d,\vp) + 1, \ldots\}, & \theM(d) = 0.
\end{cases}
\]
So,
\[
i\le  C_{\vp} T(\lambda_{i,d}^{-1},d,\vp)+1, \qquad \forall i\in \mathscr{I}_{d},  \ d \in \naturals.
\]
This implies via our technical assumption \eqref{eq:ptauassume} that, for $\tau>1$,
\begin{gather}
\nonumber
 K_{\vp,\tau}\,T (\lambda_{i,d}^{-1},d,\tau \vp) \ge
 [T(\lambda_{i,d}^{-1},d, \vp)]^\tau
 \ge
  \left[\frac{(i-1)}{C_{\vp}}\right]^\tau \qquad \forall i\in \mathscr{I}_{d,\vp},  \ d \in \naturals,\\
  \label{eq:rest_ineq}
 \frac{1}{T (\lambda_{i,d}^{-1},d,\tau \vp)} \le
\frac{K_{\vp,\tau}\, C_{\vp}^\tau}{(i-1)^\tau}\qquad \forall i \in \mathscr{I}_{d,\vp} \setminus\{1\}, \ d \in \naturals .
\end{gather}

To complete the proof, we will sum both sides of \eqref{eq:rest_ineq} over the range of $i$ appearing in \eqref{eq:rest_tractiff}, to establish that $S_{\vp'}$ is finite, where  $\vp' = \tau \vp$.  To do this we need to show that \eqref{eq:rest_ineq} holds for that range.

For any $d\in\naturals$, let
\begin{align*}
L_{\vp'} & := C_{\vp'} + \frac{2}{T(0,1,\vp')},
\intertext{and note that }
 \max\{i(\infty,d,\vp'),2\} & \le i(\infty,d,\vp') +1  = \lfloor C_{\vp'}\, T(0,d,\vp')\rfloor +2 \le  \left[C_{\vp'} + \frac{2}{T(0,d,\vp')}   \right]\, T(0,d,\vp')\\
 & =  L_{\vp'}\, T(0,d,\vp')
 \le \lceil L_{\vp'} T(0,d,\vp')\rceil .
\end{align*}
This is the inequality needed for the lower limit of the sum.

There are three cases.
\begin{enumerate}
\renewcommand{\labelenumi}{\roman{enumi})}

\item If $\theUB(d) <\lceil L_{\vp'} T(0,d,\vp')\rceil$, then
\[
\sum_{i = \lceil L_{\vp'} T(0,d,\vp')\rceil }^{\theUB(d)} \frac{1}{T (\lambda_{i,d}^{-1},d, \vp')} = 0,
\]
which does not affect the finiteness of $S_{\vp'}$.

\item If $\theUB(d) \ge \lceil L_{\vp'} T(0,d,\vp')\rceil$ and $\theM(d) = 0$, then $\{\lceil L_{\vp'} T(0,d,\vp')\rceil,  \lceil L_{\vp'} T(0,d,\vp')\rceil + 1, \ldots \} \subseteq \mathscr{I}_{d,\vp} \setminus \{1\}$.

\item If $\theUB(d) \ge \lceil L_{\vp'} T(0,d,\vp')\rceil$ and $\theM(d) >0$, then \eqref{eq:Mddef} and \eqref{eq:rest_lambda_K4} imply that
\[
\lambda_{i(\theM(d),d,\vp),d} \le \lambda_{\theUB(d)+1,d} \le \theM(d) < \lambda_{\theUB(d),d}.
\]
By the same argument as in the proof of strong tractability, $\theUB(d) \le i(\theM(d),d,\vp)-1$.  So, $\{ \lceil L_{\vp'} T(0,d,\vp')\rceil, \ldots, \theUB(d)\} \subseteq \mathscr{I}_{d,\vp} \setminus \{1\}$.

\bigskip

In both the second and third cases,
\begin{align*}
S_{\vp'} & = \sup_{d\in\naturals} \sum_{i =  \lceil L_{\vp'} T(0,d,\vp')\rceil}^{\theUB(d)} \frac{1}{T (\lambda_{i,d}^{-1},d, \tau \vp)}
\le
\sup_{d\in\naturals} \sum_{i \in \mathscr{I}_{d,\vp} \setminus\{1\} } \frac{1}{T (\lambda_{i,d}^{-1},d, \tau \vp)} \\
&  \le  K_{\vp,\tau}\, C_{\vp}^\tau
\sum_{i=2}^\infty \frac{1}{(i-1)^\tau}
 \le K_{\vp,\tau}\, C_{\vp}^\tau
\zeta (\tau)
 < \infty \qquad \text{by \eqref{eq:rest_ineq}},
\end{align*}
where $\zeta$ denotes the Riemann zeta function.

\end{enumerate}
Thus, in all three cases, the assumption of strong tractability on the restricted domain implies \eqref{eq:rest_tractiff} with $\vp$ replaced by $\vp' = \tau \vp$, and so we see the necessity of $\eqref{eq:rest_tractiff}$.
\bigskip

\bigskip

\noindent \textbf{Optimality:} \\
The proof of optimality is analogous to that for Theorem \ref{thm_main_tract2} and is omitted.

\bigskip

\noindent
This concludes the proof of Theorem \ref{thm_main_rest_tract}.  
\end{proof}

\section{Discussion and Further Work}

We have shown that many proofs of equivalent conditions for (strong) tractability of various types can be unified into a handful of proofs.  Even the arguments underlying these handful are quite similar and could be consolidated even further.  This unification simplifies our understanding of these equivalent conditions and spotlights the key ideas about what makes a class of problems hard.

However, some questions remain:
\begin{itemize}
    \item What interesting cases of $T$ do not correspond to known tractability measures? 
    \item What interesting cases are there where there is no (strong) tractability on an unrestricted domain, but there is (strong) tractability on a nontrivial restricted domain, such as $(0,\infty) \times \{1, \ldots, d_{\max}] \cup (\varepsilon_{\min}^{-1},\infty) \times \naturals$?
     \item For Hilbert spaces and solution operators, $\{\SOL_d : \cf_d \to \cg_d\}_{d \in \naturals}$, of tensor product form, the singular values take the form $\lambda_{i,d} = \lambda_{i,d,1} \cdots  \lambda_{i,d,d}$.  What more can be said about the  equivalent conditions  for (strong) tractability in this case?
    \item For information complexity defined over cones of functions rather than balls (see e.g., \cite{DinHic20a,DinEtal20a}), what can be said about equivalent conditions for (strong) tractability?
\end{itemize}

\section{Acknowledgments}
The authors gratefully acknowledge the fruitful discussions with Henryk Wo\'{z}niakowski and Erich Novak. Moreover, we thank two anonymous referees for their suggestions.  Fred Hickernell acknowledges support by the United States National Science Foundation under Award No.\ 2316011.
Peter Kritzer was supported by the Austrian Science Fund (FWF) Project P34808. For the purpose of open access, the authors have applied a CC BY public copyright licence to any author accepted manuscript version arising from this submission.

\section*{Appendix 1: A concrete example}

We define the weighted Korobov space $\ch_{\kor,d,\bsa,\bsb}$ below. 
This space contains real-analytic functions, and has been studied in the context of exponential tractability in, e.g., \cite{DKPW14, KPW17}, where the reader can find further details on this space.

Let $\bsa=\{a_j\}_{j \ge 1}$ and
$\bsb=\{b_j\}_{j \ge 1}$ be two non-decreasing sequences of real positive weights.
Fix $\omega\in(0,1)$, and write 
\[
r_{d,\bsa,\bsb} (\bsh):=\omega^{-\sum_{j=1}^{d}a_j \abs{h_j}^{b_j}}
\qquad\mbox{for}\qquad \bsh=(h_1,h_2,\dots,h_d)\in\ZZ^d.
\]

We define $\ch_{\kor,d,\bsa,\bsb}$ as the space of all one-periodic functions $f$ with absolutely convergent Fourier series given by $f(\bsx) = \sum_{\bsh \in \ZZ^d} \widehat{f}(\bsh) \exp(2 \pi \sqrt{-1}\bsh'\bsx)$, 
and with finite norm 
$||f||_{\kor,d,\bsa,\bsb}:=\langle f, f\rangle_{\kor,d,\bsa,\bsb}^{1/2}$,
where the inner product is given by
\[
 \langle f, g\rangle_{\kor,d,\bsa,\bsb}:=
 \sum_{\bsh\in\ZZ^d} r_{d,\bsa,\bsb} (\bsh) 
 \widehat{f} (\bsh) \overline{\widehat{g}(\bsh)}.
\]
Let us consider $L_2$-approximation, i.e., $\SOL_d:\ch_{\kor,d,\bsa,\bsb}\to L_2([0,1]^d)$, $\SOL_d(f)=f$, and let us allow arbitrary continuous linear functionals as information. One can show that the singular values  for this choice of $\SOL_d$ are given by
\begin{equation}\label{eq:eigenvalues}
 \left\{\lambda_{n,d}\colon n\in\NN\right\}=\left\{\left(r_{d,\bsa,\bsb} (\bsh)\right)^{-1/2}\colon \bsh\in\ZZ^d\right\}
 =\left\{\omega^{\frac{1}{2}\sum_{j=1}^{d}a_j \abs{h_j}^{b_j}}\colon \bsh\in\ZZ^d\right\}.
\end{equation}

If, for instance, we would like to find out whether $L_2$-approximation on $\ch_{\kor,d,\bsa,\bsb}$ satisfies 
exponential strong tractability, we would use condition \eqref{eq:KW_exp_spt}, which previously existed in the literature. A slight drawback of this condition, however, is that we need to know everything about the \emph{order} of the singular values, since we need to study summability of $\lambda_{i,d}^{i^{-\tau}}$ in \eqref{eq:KW_exp_spt}. This is technically rather involved, and was implicitly done in \cite{DKPW14} (we remark that the concise form 
of the condition \eqref{eq:KW_exp_spt} was not yet known when \cite{DKPW14} was written, and the proof idea in \cite{DKPW14} is less 
straightforward than just checking \eqref{eq:KW_exp_spt}.

Alternatively, we can use the newly derived condition \eqref{eq:exp_spt} instead of \eqref{eq:KW_exp_spt} and study the expression
\[
 \sum_{i=1}^\infty \frac{1}{T\left(\lambda_{i,d}^{-1},1,(p,0)\right)},
\]
for a real $p>0$. Inserting the singular values as in \eqref{eq:eigenvalues}, yields
\begin{eqnarray*}
  \sum_{i=1}^\infty \frac{1}{T\left(\lambda_{i,d}^{-1},1,(p,0)\right)}
  &=&
\sum_{\bsh\in\ZZ^d}
\frac{1}{\left[\max\left\{1,\log\left(1+\omega^{-\frac{1}{2}\sum_{j=1}^d a_j \abs{h_j}^{b_j}}\right)
\right\}\right]^p}\\
&\le&1 +\sum_{\bsh\in\ZZ^d\setminus \{\boldsymbol{0}\}} \frac{1}{\left(\log \left(\omega^{-\frac{1}{2}\sum_{j=1}^d a_j \abs{h_j}^{b_j}}\right)\right)^p} \, .
\end{eqnarray*}
It is then not hard to show that the two conditions 
\[
 \liminf_{j\to\infty}\frac{\log a_j}{j}>0 \quad \mbox{and}\quad \sum_{j=1}^\infty \frac{1}{b_j} <\infty 
\]
are sufficient to achieve exponential strong tractability (for details we refer to \cite{K24}), and this is in accordance with the findings in \cite{DKPW14}. Note, however, that this result in \cite{DKPW14} was technically much harder to show than 
the analysis of the previous sum is.

\section*{Appendix 2: Proof of Proposition~\ref{prop:equiv_conditions}}

\begin{proof}[Proposition 1]
We will give the proof of equivalence of the conditions \eqref{eq:exp_pt} and \eqref{eq:KW_exp_pt}. The proof of the equivalence of \eqref{eq:exp_spt} and \eqref{eq:KW_exp_spt} is analogous and simpler (actually, the latter proof can be seen as a special case of the former, by assuming all exponents of $d$ to be zero).

Let us first assume that \eqref{eq:KW_exp_pt} holds, i.e.,
\[
M:=\sup_{d\in\naturals} d^{-\tau_1} \sum_{i=\lceil C d^{\tau_3} \rceil}^{\infty} \lambda_{i,d}^{i^{-\tau_2}} < \infty
\]
for some $\tau_1,\tau_3\ge 0$ and $\tau_2, C>0$. Here, $M$ and other constants above may depend on $\tau_1$, $\tau_2$, and $\tau_3$.
Define $B_d$ as the set of indices that are contained in the above sum and for which the terms in the sum are relatively large:
\[
B_d:=\left\{i\in\naturals \colon i\ge \left\lceil Cd^{\tau_3}\right\rceil\quad\mbox{and}\quad
\lambda_{i,d}^{i^{-\tau_2}}>\frac{1}{e}\right\}.
\]
Due to our assumption, we see that $\card(B_d) < e\,M\,d^{\tau_1}$ and $\card(B_d)\le \lfloor e\,M\,d^{\tau_1} \rfloor$. 
Suppose now that $i\ge \lceil C d^{\tau_3}\rceil$ but $i\not\in B_d$, which means that the term in the above sum is relatively small:
\begin{equation}
    \lambda_{i,d}^{i^{-\tau_2}} \le \frac{1}{e} \iff \log (\lambda_{i,d}^{-1})\ge i^{\tau_2}.
\label{eq:logandi}
\end{equation}

Now, choose $p> 1/\tau_2$, $q=\max\{\tau_1,\tau_3\}$, and $L_{(p,q)}=\max\{C, e\, M\}$, which implies that $\lceil L_{(p,q)} d^{q}\rceil \ge e\, M\, d^{\tau_1}$. Then, for any $d\in\naturals$, the desired sum in \eqref{eq:exp_pt} can be bounded by splitting it into the sum of the larger terms and the sum of the smaller terms:
\begin{align*}
\MoveEqLeft{d^{-q} \sum_{i=\lceil L_{(p,q)} d^{q}\rceil}^\infty \frac{1}{[\max\{1,\log(1+\lambda_{i,d}^{-1})\}]^p}} \\
&=d^{-q} \sum_{\substack{i=\lceil L_{(p,q)} d^{q}\rceil\\ i\in B_d}}^\infty \frac{1}{[\max\{1,\log(1+\lambda_{i,d}^{-1})\}]^p} + d^{-q} \sum_{\substack{i=\lceil L_{(p,q)} d^{q}\rceil\\ i\not\in B_d}}^\infty \frac{1}{[\max\{1,\log(1+\lambda_{i,d}^{-1})\}]^p}\\
&\le d^{-q} e\, M\, d^{\tau_1} + d^{-q}\sum_{\substack{i=\lceil L_{(p,q)} d^{q}\rceil\\ i\not\in B_d}}^\infty \frac{1}{[\log (\lambda_{i,d}^{-1})]^p}\\
&\le e\, M + d^{-q}\sum_{\substack{i=\lceil L_{(p,q)} d^{q}\rceil\\ i\not\in B_d}}^\infty \frac{1}{i^{\tau_2 p}} \qquad \text{by \eqref{eq:logandi}}\\
&\le  e\, M + \zeta (\tau_2 p).
\end{align*}
Since $p$ was chosen to be strictly larger than $1/\tau_2$, it follows that the latter expression is finite and its value is independent of $d$. Therefore, Condition \eqref{eq:exp_pt} holds.

Conversely, let us now assume that Condition \eqref{eq:exp_pt} holds for some $p> 0$ and $q\ge 0$ that are assumed to be fixed. All constants in this part of the proof may depend on $p$ and $q$.  We will show next that this implies Condition \eqref{eq:KW_exp_pt}. Indeed, due to \eqref{eq:exp_pt}, there exist constants $M > 1$ and $L_{(p,q)}>0$ such that for all $d\in\naturals$,
\[
d^{-q}
\sum_{i=\lceil L_{(p,q)} d^{q}\rceil}^\infty \frac{1}{[\max\{1,\log(1+\lambda_{i,d}^{-1})\}]^p} \le M,
\]
which is equivalent to
\[
\sum_{i=\lceil L_{(p,q)} d^{q}\rceil}^\infty \frac{1}{[\max\{1,\log(1+\lambda_{i,d}^{-1})\}]^p} \le d^q M.
\]
Similarly to the proof of the sufficient condition in Theorem \ref{thm_main_strong_tract2}, we see that for any $n > \lceil L_{(p,q)} d^{q}\rceil $,
\[
\frac{1}{T(\lambda_{n+1,d}^{-1},1,\vp)} \le \frac{1}{n-\lceil L_{(p,q)} d^{q}\rceil +1 }
\sum_{i=\lceil L_{(p,q)} d^{q}\rceil}^\infty  \frac{1}{T(\lambda_{i,d}^{-1},1,\vp)}
\le \frac{d^q M}{n-\lceil L_{(p,q)} d^{q}\rceil +1 }.
\]
This inequality implies for our specific choice of $T$ that
\[
\frac{n}{d^q M}-\frac{L_{(p,q)}}{M}\le\frac{n-\lceil L_{(p,q)} d^{q}\rceil +1 }{d^q M} \le [\max\{1,\log(1+\lambda_{n+1,d}^{-1})\}]^p
\le [1 + \log(1+\lambda_{n+1,d}^{-1})]^p.
\]
Since both $L_{(p,q)}$ and $M$ in the latter chain of inequalities are independent of $d$ and $n$, there
exists a positive constant $K$ such that
\[
\frac{n}{d^q K} \le [1 + \log(1+\lambda_{n+1,d}^{-1})]^p\quad \forall n > \lceil L_{(p,q)} d^{q}\rceil,
\]
which implies
\[
\frac{n^{1/p}}{d^{q/p} K^{1/p}} -1 \le \log(1+\lambda_{n+1,d}^{-1})\quad \forall n > \lceil L_{(p,q)} d^{q}\rceil.
\]
This, in turn, implies the existence of another constant $\widetilde{K}>0$ such that
\[
\lambda_{n,d} \le \exp\left(-\frac{n^{1/p}}{\widetilde{K}^{1/p} d^{q/p}}\right)
\quad \forall n > \lceil L_{(p,q)} d^{q}\rceil-1,
\]
which is essentially the situation described in \cite[p.~118]{KriWoz19a}. We can then choose $\tau_2\in (0,1/p)$,
and proceed as in \cite{KriWoz19a} to obtain
\[
\sum_{i=\lceil L_{(p,q)} d^{q}\rceil}^\infty \lambda_{i,d}^{i^{-\tau_2}}=\mathcal{O}(d^{q/(1-\tau_2 p)}),
\]
with the implied factor in the $\mathcal{O}$-notation independent of $d$. Then, we can choose $\tau_1=q/(1-\tau_2 p)$ and $\tau_3=q$ and see that \eqref{eq:KW_exp_pt} holds.

This concludes the proof of the equivalence of \eqref{eq:exp_pt} and \eqref{eq:KW_exp_pt}. 

\end{proof}

\end{document}